\numberwithin{equation}{section}
\newtheorem{theorem}{Theorem}[section]
\newtheorem{proposition}[theorem]{Proposition}
\newtheorem{corollary}[theorem]{Corollary}
\newtheorem{lemma}[theorem]{Lemma}
\theoremstyle{definition}
\newtheorem{definition}[theorem]{Definition}
\newtheorem{example}[theorem]{Example}
\DeclareMathOperator{\Sym}{Sym}
\newcommand{\Linf}{$L_\infty$}
\newcommand{\Nbar}{\bar{N}}
\let\To=\to
\renewcommand{\to}{\To}
\newcommand{\too}{\To}
\DeclareSymbolFont{cyrletters}{OT2}{wncyr}{m}{n}
\DeclareMathSymbol{\Sh}{\mathalpha}{cyrletters}{"58}
\DeclareMathSymbol{\sh}{\mathalpha}{cyrletters}{"78}
\let\im=\undef
\DeclareMathOperator{\im}{im}
\DeclareMathOperator{\Map}{Hom}
\newcommand{\p}{\partial}
\newcommand{\bull}{\bullet}
\newcommand{\C}{\mathbb{C}}
\newcommand{\Z}{\mathbb{Z}}
\newcommand{\CX}{\mathcal{X}}
\newcommand{\eps}{\varepsilon}
\DeclareMathOperator{\MC}{\mathsf{MC}}
\DeclareMathOperator{\ad}{ad}
\newcommand{\CO}{\mathcal{O}}
\DeclareMathOperator{\End}{End}
\newcommand{\half}{\tfrac{1}{2}}
\newcommand{\CR}{\mathcal{R}}
\newcommand{\mc}{\mathcal{MC}}
\DeclareMathOperator{\eq}{eq}
\DeclareMathOperator{\coeq}{coeq}
\DeclareMathOperator{\Tot}{Tot}
\DeclareMathOperator{\Diag}{Diag}
\DeclareMathOperator{\CHarr}{CHarr}
\DeclareMathOperator{\gr}{gr}
\newcommand{\<}{\langle\,}
\renewcommand{\>}{\,\rangle}
\renewcommand{\o}{\otimes}
\newcommand{\B}{\mathsf{B}}
\DeclareMathOperator{\Aut}{Aut}
\DeclareMathOperator{\Hom}{Hom}
\DeclareMathOperator{\GL}{GL}
\DeclareMathOperator{\coker}{coker}
\DeclareMathOperator{\Spec}{Spec}
\begin{document}

\title[The derived Maurer-Cartan locus]{The Derived Maurer-Cartan Locus}

\author{Ezra Getzler}

\address{Department of Mathematics, Northwestern University, Evanston,
  Illinois, USA}

\email{getzler@northwestern.edu}

\thanks{The main results in this paper were obtained while the author
  held a Microsoft Research Visiting Fellowship at the Isaac Newton
  Institute of Mathematical Sciences of Cambridge University. He
  thanks the organizers of the programme on Operads and Multiple
  Zeta-Values, John D.S. Jones and Bruno Vallette, for this
  opportunity, and the participants in the programme, especially Jon
  Pridham, for valuable conversations on this subject. \\ \indent This
  work was completed while the author was a visitor at
  the University of Geneva. The author is partially supported by
  Travel Grant \#243025 of the Simons Foundation.}

\maketitle 

\begin{abstract}
  The derived Maurer-Cartan locus is a functor $\MC^\bull$ from
  differential graded Lie algebras to cosimplicial schemes. If $L$ is
  differential graded Lie algebra, let $L_+$ be the truncation of $L$
  in positive degrees $i>0$. We prove that the differential graded
  algebra of functions on the cosimplicial scheme $\MC^\bull(L)$ is
  quasi-isomorphic to the Chevalley-Eilenberg complex of $L_+$.
\end{abstract}

\section{Introduction}

Derived algebraic geometry is a non-linear analogue of homological
algebra. Just as homological algebra studies modules $M$ through
projective resolutions
\begin{equation*}
  \dots \to P_2 \to P_1 \to P_0 \to M \to 0 ,
\end{equation*}
derived algebraic geometry studies algebraic schemes through
resolutions by derived schemes. In this paper, we will only concern
ourselves with affine derived schemes.

In characteristic zero, derived schemes may be represented as
differential graded schemes or as cosimplicial schemes: differential
graded schemes were introduced by Tate~\cite{Tate}, and studied
further by Ciocan-Fontanine and Kapranov~\cite{CK}. The second
approach is largely due to Quillen \cite{Quillen}.

If $A^*$ is a differential graded algebra, denote by $A^\sharp$ its
underlying graded algebra. In this paper, we only consider
differential graded commutative algebras $A^*$ such that $A^i=0$ for
$i>0$. If $E^*$ is a vector space concentrated in negative degrees,
let $\Sym E$ be the free graded commutative algebra generated by
$E^*$: this is a polynomial algebra in generators in negative even
degrees tensored with an exterior algebra in generators in negative
odd degrees.

An affine differential graded scheme $\CX$ over a field $K$ of
characteristic zero is characterized by its ring of functions
$\CO(\CX)$, which is a differential graded commutative algebra over
$K$, with differential $d\colon\CO^*(\CX)\to\CO^{*+1}(\CX)$,
satisfying the following conditions:
\begin{enumerate}[a)]
\item $\CO^i(\CX)=0$ in positive degree $i>0$, and there is a regular
  affine variety $X$ such that $\CO^0(\CX)\cong\CO(X)$;
\item there is a graded vector bundle
  \begin{equation*}
    E^* = E^{-1} \oplus E^{-2} \oplus \dots
  \end{equation*}
  over $X$, and an isomorphism of graded commutative algebras
  \begin{equation*}
    \CO^*(\CX)^\sharp \cong \Gamma(X,\Sym E)
  \end{equation*}
  over $\CO(X)$.
\end{enumerate}

The condition that the underlying variety $X$ be regular is not
usually taken to be part of the definition: Ciocan-Fontanine and
Kapranov call differential graded schemes satisfying this additional
condition \textbf{differential graded manifolds}. But this condition
will always be satisfied in this paper. (In the language of
homotopical algebra, it is a fibrancy condition: it is analogous to
restricting attention to projective resolutions in homological
algebra.)

The \textbf{classical locus} $\pi^0(\CX)\subset X$ of a derived scheme
is the vanishing locus of the sheaf of ideals
\begin{equation*}
  \im\bigl( d\colon \CO^{-1}(\CX) \to \CO^0(\CX) \bigr) ,
\end{equation*}
or equivalently, the spectrum of the quotient ring
\begin{equation*}
  H^0(\CO^*(\CX),d) = \coker\bigl( d\colon \CO^{-1}(\CX) \to \CO^0(\CX) \bigr) .
\end{equation*}
In the special case that the cohomology of $\CO(\CX)$ is concentrated
in degree $0$, the differential graded scheme $\CX$ should be thought
of as a resolution of the vanishing locus $\pi^0(\CX)$, in the same
way as a projective resolution resolves a module.

Any regular affine scheme is a differential graded affine scheme, but
there are many more examples. Tate proved in \cite{Tate} that given
any finitely generated Noetherian commutative algebra $R$ over a field
$K$ of characteristic $0$, there is a differential graded commutative
ring $\CR$ of the above type such that $\CR^\sharp$ is a finitely
generated free graded commutative algebra and
\begin{equation*}
  H^i(\CR) \cong
  \begin{cases}
    R , & i=0 , \\ 0 , & i<0 .
  \end{cases}
\end{equation*}
In effect, $\CR$ is the ring of functions on an affine differential
graded scheme $\CX$ with $\pi^0(\CX)\cong\Spec(R)$. Tate also proved
that this affine differential graded scheme is essentially unique, in
the sense that given any two differential graded algebras $\CR_0$ and
$\CR_1$ with the above properties, there is morphism of differential
graded algebras from $\CR_0$ to $\CR_1$ such that the following
diagram commutes:
\begin{equation*}
  \begin{xy}
    \Dtriangle/{-->}`>`<-/<600,300>[\CR_0`R`\CR_1;``]
  \end{xy}
\end{equation*}

Let $L^*$ be a differential graded Lie algebra. This means that $L^*$
is a cochain complex, with differential $\delta\colon L^*\to L^{*+1}$,
with a bilinear bracket $[-,-]\colon L^i\times L^j\to L^{i+j}$, which
is graded symmetric,
\begin{equation*}
  [x,y] = - (-1)^{ij} [y,x] , \quad \text{$x\in L^i$, $y\in L^j$,}
\end{equation*}
satisfies the graded Jacobi identity,
\begin{equation*}
  [x,[y,z]] = [[x,y],z] + (-1)^{ij} [y,[x,z]] , \quad \text{$x\in L^i$, $y\in
    L^j$, $z \in L^k$,}
\end{equation*}
and the Leibniz identity,
\begin{equation*}
  \delta[x,y] = [\delta x,y] + (-1)^i [x,\delta y] , \quad \text{$x\in L^i$,
  $y\in L^j$.}
\end{equation*}
The differential graded Lie algebra is of \textbf{finite type} if
$\dim L^i<\infty$ for all $i$, and vanishes for $i\ll0$.

The function
\begin{equation*}
  F(\mu) = \delta\mu + \half [\mu,\mu]
\end{equation*}
from $L^1$ to $L^2$ is called the \textbf{curvature}. It satisfies the
\textbf{Bianchi identity}
\begin{equation}
  \label{Bianchi}
  \delta F(\mu) + [\mu,F(\mu)] = 0 .
\end{equation}
The \textbf{Maurer-Cartan locus} $\MC(L)\subset L^1$ of $L^*$ is the
vanishing locus of the curvature $F(\mu)=0$.

There is a variant of the Maurer-Cartan locus, called the
\textbf{Deligne groupoid}, which takes into account the component
$L^0$ of the differential graded Lie algebra in degree $0$. The Lie
algebra $L^0$ acts on $L^1$ by vector fields $X_\xi$, $\xi\in L^0$,
given by the formula
\begin{equation*}
  X_\xi(\mu) = - \delta\xi - [\mu,\xi] , \quad \mu \in L^1 .
\end{equation*}
Let $G^0$ be the universal algebraic group with Lie algebra $L^0$ (so
that all finite-dimensional representations of $L^0$ come from a
representation of $G^0$), and suppose that the above action of $L^0$
exponentiates to an action of $G^0$ on $L^1$: for example, this will
be the case when the differential $\delta\colon L^0\to L^1$
vanishes. Then this action preserved the Maurer-Cartan locus $\MC(L)$:
the groupoid associated to the action of $G^0$ on $\MC(L)$ is called
the Deligne groupoid of the differential graded Lie algebra
$L^*$. Locally, most, if not all, deformation problems in algebraic
geometry may be represented as Deligne groupoids.

\begin{example}
  \label{quot}

  Let $R$ be a commutative ring, and let $A^*$ be a differential
  graded algebra defined over $R$. The Hochschild complex $\B^*(R,A)$
  is the bigraded abelian group
  \begin{equation*}
    \B^{j,k}(R,A) = \Hom(R^{\o j},A^k)
  \end{equation*}
  with differentials $d\colon \B^{j,k}\to \B^{j,k+1}$ and
  $\delta\colon \B^{j,k}\to \B^{j+1,k}$, given by the formulas
  \begin{equation*}
    (dc)(r_1,\dots,r_{j+1}) = r_1c(r_2,\dots,r_{j+1}) +
    \sum_{i=1}^j (-1)^i c(r_1,\dots,r_ir_{i+1},\dots,r_{j+1})
  \end{equation*}
  and $(\delta c)(r_1,\dots,r_j) = \delta(c(r_1,\dots,r_j))$.
  Furthermore, $\B(R,A)$ is a graded Lie algebra, with bracket
  \begin{multline*}
    [c_1,c_2](r_1,\dots,r_{j_1+j_2}) = (-1)^{j_1k_2} \,
    c_1(r_1,\dots,r_{j_1})c_2(r_{j_1+1},\dots,r_{j_1+j_2}) \\
    - (-1)^{(j_1+k_1)(j_2+k_2)+j_2k_1} \,
    c_2(r_1,\dots,r_{j_2})c_1(r_{j_2+1},\dots,r_{j_1+j_2}) ,
  \end{multline*}
  where $c_1\in \B^{j_1,k_1}$ and $c_2\in \B^{j_2,k_2}$.

  Special cases of this construction give differential graded Lie
  algebras with applications to deformation theory. For example, let
  $M$ be a finite-dimensional vector space and let $n$ be a natural
  number, and consider the graded algebra
  \begin{equation*}
    A^k =
    \begin{cases}
      \End(M) , & k=0 , \\
      \Hom(R^{\oplus n},M) , & k=1 , \\
      0 , & \text{otherwise.}
    \end{cases}
  \end{equation*}
  The product on $A^*$ is given by the product on $\End(M)$, and the
  natural pairing
  $\End(M)\o\Hom(R^{\oplus n},M)\to\Hom(R^{\oplus n},M)$, and
  otherwise it vanishes. The differential on $A^*$ is zero. A
  Maurer-Cartan element of $\B(R,A)$ is a pair $(\rho,f)$, where
  $\rho$ is an action of $R$ on $M$, and $f\colon R^{\oplus n}\to M$
  is a morphism of $R$-modules. The Deligne groupoid is given by the
  natural action of the semisimple algebraic group $\GL(M)$ on
  $\MC(\B(R,A))$, which has the effect of conjugating $\rho$, and
  composing with $f$.

  The \emph{Quot} scheme of projective geometry is obtained by an
  analogue of this construction: one takes a finite dimensional
  truncation $R$ of the homogenous ring of polynomials
  $\C[x_0,\dots,x_N]$, a finite dimensional truncation $M$ of a
  homogenous $\C[x_0,\dots,x_N]$-module, and forms the Lie subalgebra
  $\B_0(R,A)\subset\B(R,A)$ consisting of elements of zero total
  homogeneity. For further details, see \cite{CK}.
  
\end{example}

\begin{example}
  
  Our second example is the Harrison complex of a vector space
  $R$. Given natural numbers $p$ and $q$, let $\Sh(p,q)$ be the set of
  partitions of $\{0,\dots,p+q-1\}$ into disjoint subsets
  $I=(i_1<\dots<i_p)$ and $J=(j_1<\dots<j_q)$. Harrison cochains are
  multilinear maps from $R$ to itself which vanish on shuffles:
  $\CHarr^k(R,R)$ is the set of $c\in \Hom(R^{\o k+1},R)$ such that
  for all $0<p<k$, we have
  \begin{equation*}
    \sum_{(I,J)\in\Sh(p,k-p)} (-1)^{i_1+\dots+i_p} \,
    c(r_{i_1},\dots,r_{i_p},r_{j_1},\dots,r_{j_{k-p}}) = 0 .
  \end{equation*}
  This is a graded Lie algebra with respect to the Gerstenhaber
  bracket: if $c_1\in\CHarr^{k_1}(R,R)$ and $c_2\in\CHarr^{k_2}(R,R)$,
  the bracket equals
  \begin{multline*}
    [c_1,c_2](r_0,\dots,r_{k_1+k_2}) = \sum_{i=0}^{k_1} (-1)^{ik_2}
    c_1(r_0,\dots,c_2(r_i,\dots,r_{i+k_2}),\dots,r_{k_1+k_2}) \\
    - \sum_{i=0}^{k_2} (-1)^{k_1k_2+ik_1}
    c_2(r_0,\dots,c_1(r_i,\dots,r_{i+k_1}),\dots,r_{k_1+k_2}) .
  \end{multline*}
  The Maurer-Cartan locus $\MC(\CHarr(R,R))$ consists of all bilinear
  maps $\mu\in\CHarr^1(R,R)$ such that $[\mu,\mu]=0$. This is the
  space of all commutative associative products on $R$.

  The Lie algebra $\CHarr^0(R,R)$ may be identified with $\End(R)$,
  which is a semisimple Lie algebra with associated universal
  algebraic group $\GL(R)$. This group acts on the graded Lie algebra
  $\CHarr(R,R)$ by the formula
  \begin{equation*}
    (g\cdot c)(r_0,\dots,r_k) = g(c(g^{-1}(r_0),\dots,g^{-1}(r_k))) .
  \end{equation*}
  Thus, the Deligne groupoid of $\CHarr^*(R,R)$ is the space of
  commutative associative products on $R$ up to conjugation.

\end{example}

\begin{example}
  
  As a final example, we sketch an application of this formalism in
  the study of holomorphic vector bundles. This example lies outside
  algebraic geometry, and requires the use of Sobolev spaces to make
  any sense of it.

  Given a complex manifold $X$ and a holomorphic vector bundle $E$ on
  $X$, the Dolbeault complex $A^{0,*}(X,\End(E))$ is a differential
  graded Lie algebra, with differential $\overline{\partial}$. The
  curvature of an element $\mu\in A^{0,1}(X,\End(E))$ is the
  obstruction in $A^{0,2}(X,\End(E))$ to the first-order differential
  operator $\overline{\partial}+\ad(\mu)$ inducing a holomorphic
  structure on $E$. Thus, the Maurer-Cartan locus of
  $A^{0,*}(X,\End(E))$ is the space of holomorphic structures (or
  Cauchy-Riemann operators) on $E$.

  The graded Lie algebra $A^{0,0}(X,\End(E))$ is the space of sections
  of the endomorphism bundle $\End(E)$. The associated group is the
  gauge group of $E$, which is the space of sections of the smooth
  bundle of Lie groups $\Aut(G)$, and the Deligne groupoid models the
  stack of holomorphic structures on $E$ up to gauge equivalence.

\end{example}

The \textbf{differential graded Maurer-Cartan locus} $\mc(L)$ of a
differential graded Lie algebra $L^*$ of finite type is the affine
differential graded scheme with underlying scheme the affine space
$L^1$, and with the graded algebra of functions
\begin{equation*}
  \CO(\mc(L))^\sharp = \Sym (L_+[1]{}^\vee) .
\end{equation*}
Here, $L^*_+$ is the truncation of $L$ in positive degrees:
\begin{equation*}
  L^i_+ =
  \begin{cases}
    L^i , & i\ge1 , \\
    0 , & i<1 ,
  \end{cases}
\end{equation*}
and $L_+[1]$ denotes the shift of the cochain complex $L_+$ downward
in degree by $1$. This graded algebra may be identified with the
graded vector space of Chevalley-Eilenberg cochains of the
differential graded Lie algebra $L^*_+$. The differential $d$ on
$\CO(\mc(L))$ is the differential of the Chevalley-Eilenberg complex:
it is the sum of the adjoints of $\delta$, which maps $(L^{i+1})^\vee$
to $(L^i)^\vee$, and of $[-,-]$, which maps $(L^k)^\vee$ to
\begin{equation*}
  \bigoplus_{i=1}^{k-1} \, (L^i)^\vee \o (L^{k-i})^\vee .
\end{equation*}

The image of the differential $d$ in $\CO^0(\mc(L))$ is the ideal
generated by the curvature $F(x)$. This proves the following result.
\begin{proposition}
  The classical locus $\pi^0(\mc(L))$ of the differential graded
  scheme $\mc(L)$ is the Maurer-Cartan locus $\MC(L)$.
\end{proposition}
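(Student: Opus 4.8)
The plan is to compute directly the bottom two terms of the complex $\CO(\mc(L))$ and the differential between them, since by definition $\pi^0(\mc(L))$ is the spectrum of $H^0(\CO(\mc(L)),d)=\coker\bigl(d\colon\CO^{-1}(\mc(L))\to\CO^0(\mc(L))\bigr)$. Reading off the degrees in $\CO(\mc(L))^\sharp=\Sym(L_+[1]^\vee)$, the only generators in cohomological degree $0$ are those of $(L^1)^\vee$, so $\CO^0(\mc(L))=\Sym((L^1)^\vee)=\CO(L^1)$ is the ring of polynomial functions on the affine space $L^1$; the generators in degree $-1$ are those of $(L^2)^\vee$, and a monomial lands in degree $-1$ exactly when it is one such generator times generators of $(L^1)^\vee$, so $\CO^{-1}(\mc(L))=(L^2)^\vee\o\CO(L^1)$.

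First I would evaluate $d$ on a linear generator $\phi\in(L^2)^\vee$. By the stated formula, $d$ contributes two terms here: the adjoint of $\delta\colon L^1\to L^2$ gives the linear function $\mu\mapsto\phi(\delta\mu)$ in $(L^1)^\vee$, and the adjoint of the bracket $[-,-]\colon L^1\times L^1\to L^2$ gives a quadratic function in $\Sym^2((L^1)^\vee)$. Since $[-,-]$ is symmetric on $L^1$, evaluating that quadratic function on the diagonal returns $\half\phi([\mu,\mu])$. Adding the two contributions yields
\begin{equation*}
  (d\phi)(\mu) = \phi(\delta\mu) + \half\phi([\mu,\mu]) = \phi(F(\mu)) ,
\end{equation*}
so $d\phi=\phi\circ F$ is precisely the component of the curvature $F\colon L^1\to L^2$ obtained by pairing with $\phi$.

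Next I would extend this to all of $\CO^{-1}(\mc(L))$ using that $d$ is a derivation and that it vanishes on $\CO^0(\mc(L))=\CO(L^1)$, there being no generators in positive degree. Writing a general element of $\CO^{-1}$ as a finite sum $\sum_a\phi_a\o f_a$ with $\phi_a\in(L^2)^\vee$ and $f_a\in\CO(L^1)$, the Leibniz rule together with $df_a=0$ gives $d\bigl(\sum_a\phi_a\o f_a\bigr)=\sum_a(\phi_a\circ F)\,f_a$. Hence the image of $d$ in $\CO^0(\mc(L))$ is the $\CO(L^1)$-submodule, and so the ideal, generated by the components $\{\phi\circ F:\phi\in(L^2)^\vee\}$ of the curvature. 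Passing to the quotient and taking $\Spec$ then identifies $\pi^0(\mc(L))$ with $\{\mu\in L^1:F(\mu)=0\}=\MC(L)$.

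The calculation is short, and the one point requiring genuine care is the normalization in the second step: one must check that the adjoint of the bracket appearing in the Chevalley-Eilenberg differential produces exactly the factor $\half$ in $F(\mu)=\delta\mu+\half[\mu,\mu]$, rather than $1$ or $2$. This amounts to tracking the shift $[1]$ and the polarization identity relating the symmetric bilinear map $[-,-]$ to its quadratic function on the diagonal, and it is exactly why the curvature is normalized with the coefficient $\half$.
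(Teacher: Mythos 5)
Your proof is correct and follows essentially the same route as the paper: the paper's entire argument is the one-line observation that the image of $d$ in $\CO^0(\mc(L))$ is the ideal generated by the components of the curvature $F$, which is exactly the claim you establish in detail by computing $\CO^0$ and $\CO^{-1}$, evaluating $d$ on the generators $(L^2)^\vee$, and invoking the Leibniz rule together with $d|_{\CO^0}=0$. The only difference is one of exposition, not of method: you supply the verification (including the normalization giving the factor $\tfrac{1}{2}$) that the paper leaves implicit.
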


There is also a differential graded analogue of the Deligne
groupoid. For simplicity, we consider only the case where the
differential $\delta\colon L^0\to L^1$ vanishes. The universal
algebraic group $G^0$ with Lie algebra $L^0$ acts on the differential
graded Lie algebra $L_+$, and hence on the derived Maurer-Cartan locus
$\MC(L)$. The groupoid in differential graded schemes associated to
this action is the derived Deligne groupoid of $L$. For example, the
derived Deligne groupoid of the differential graded Lie algebra
$\B_0(R,A)$ of Example~\ref{quot} gives rise to the derived
\emph{Quot} scheme of Ciocan-Fontanine and Kapranov \cite{CK}, while
the derived Deligne groupoid of the graded Lie algebra $\CHarr(R,R)$
leads to the derived stack of commutative associative products on $R$.

Quillen~\cite{Quillen} introduced cosimplicial schemes as an alternate
foundation for the theory of derived geometry: unlike differential
graded schemes, they give the correct model for derived schemes even
in positive characteristic (though in this paper, we will only
consider cosimplicial schemes in characteristic zero). A cosimplicial
scheme $X^\bull$ is a functor from the category $\Delta$ of nonempty
finite totally ordered sets to the category of schemes. For $n\ge0$,
denote the object
\begin{equation*}
  0 < \dots < n
\end{equation*}
of $\Delta$ by $[n]$: the functor $X^\bull$ takes the value $X^n$ at $[n]$.

A cosimplicial scheme $X^\bull$ is the spectrum of a simplicial
commutative ring. Quillen proved that in characteristic zero,
simplicial commutative algebras and differential graded commutative
algebras have equivalent homotopy theory, in the following sense: the
normalization functor $N_*$ from simplicial vector spaces to chain
complexes induces a functor from simplicial commutative algebras to
differential graded commutative algebras, also denoted $N_*$, and this
functor induces an equivalence of homotopy categories. (In fact, $N_*$
is a right Quillen equivalence with respect to the projective closed
model structures on these categories; cf.\ \cite{RH}*{Section~4}.) We
review the construction of the functor $N_*$ in Sections~2 and 3.

In the category $\Delta$, we have the \textbf{coface maps}
\begin{equation*}
  d^i \colon [n-1] \to [n] , \quad 0\le i\le n ,
\end{equation*}
defined by
\begin{equation*}
  d^i(j) =
  \begin{cases}
    j , & j<i , \\
    j+1 , & j\ge i ,
  \end{cases}
\end{equation*}
and the \textbf{codegeneracy maps}
\begin{equation*}
  s^i \colon [n+1] \to [n] , \quad 0\le i\le n ,
\end{equation*}
defined by
\begin{equation*}
  s^i(j) =
  \begin{cases}
    j , & j<i , \\
    j-1 , & j\ge i .
  \end{cases}
\end{equation*}
If $X_\bull$ is a cosimplicial object, we denote the induced morphisms
$d^i\colon X^{n-1}\to X^n$ and $s^i\colon X^{n+1}\to X^n$ by the same
symbol. If $X_\bull$ is a simplicial object, we denote the face and
degeneracy morphisms by $\p_i\colon X^n \to X^{n-1}$ and
$\sigma_i\colon X^n\to X^{n+1}$.

The maximal augmentation of a cosimplicial scheme is the equalizer
\begin{equation*}
  \pi^0(X^\bull) = \eq \Bigl(
  \begin{xy}
    \morphism|a|/{@{>}@<3pt>}/<400,0>[X^0`X^1;d^0]
    \morphism|b|/{@{>}@<-3pt>}/<400,0>[X^0`X^1;d^1]
  \end{xy} \Bigr)
\end{equation*}
Observe the analogy with the definition of the set of components of a
simplicial set $X_\bull$, which is the coequalizer
\begin{equation*}
  \pi_0(X_\bull) = \coeq \Bigl(
  \begin{xy}
    \morphism|a|/{@{>}@<3pt>}/<400,0>[X_1`X_0;\p_0]
    \morphism|b|/{@{>}@<-3pt>}/<400,0>[X_1`X_0;\p_1]
  \end{xy} \Bigr)
\end{equation*}

It is the goal of this paper to make the equivalence between
differential graded schemes and cosimplicial schemes in characteristic
zero as explicit as possible for derived Maurer-Cartan loci. The
realization of the derived Maurer-Cartan locus as a cosimplicial
scheme that we propose is new. This realization may also be used in
other settings, for example when afine schemes are replaced by Banach
analytic spaces: in contrast with the differential graded
Maurer-Cartan locus, its definition does not require making sense of
the Chevalley-Eilenberg complex for differential graded Banach Lie
algebras.

In order to realize the derived Maurer-Cartan locus as a cosimplicial
scheme, we introduce a certain cosimplicial differential graded
commutative algebra $\Lambda^\bull$. As a graded algebra, $\Lambda^n$
is the exterior algebra generated by elements $\{e_0,\dots,e_n\}$ in
degree $-1$: the differential on $\Lambda^n$ is defined on the
generators $e_i$ by $\delta e_i=1$. A morphism $f\colon[m]\to[n]$ of
$\Delta$ induces a homomorphism $f\colon\Lambda^m\to\Lambda^n$ of
differential graded commutative algebras by its action on the
generators: $f(e_i)=e_{f(i)}$.

The tensor product $L^*\o\Lambda^n$ of a differential graded Lie
algera $L^*$ with the differential graded commutative algebra
$\Lambda^n$ is again a differential graded Lie algebra, with bracket
\begin{equation*}
  [ x_1\o\alpha_1 , x_2\o\alpha_2 ] = (-1)^{j_2k_1} \, [x_1,x_2]\o
  \alpha_1\alpha_2 ,
\end{equation*}
where $x_1\in L^{j_1}$, $x_2\in L^{j_2}$,
$\alpha_1 \in (\Lambda^n){}^{k_1}$, and
$\alpha_2 \in (\Lambda^n){}^{k_2}$.

\begin{definition}
  \label{dMC}
  The \textbf{derived Maurer-Cartan locus} $\MC^\bull(L)$ of a
  differential graded Lie algebra $L$ is the cosimplicial scheme
  \begin{equation*}
    \MC^n(L) = \MC(L\o\Lambda^n) .
  \end{equation*}
\end{definition}

We may now state our main result.
\begin{theorem}
  \label{main}
  The normalization $N_*(\CO(\MC^\bull(L)))$ of the simplicial
  commutative algebra $\CO(\MC^\bull(L))$ of functions on the
  cosimplicial scheme $\MC^\bull(L)$ is a differential graded
  commutative algebra concentrated in nonpositive degrees. There is a
  natural homomorphism of differential graded commutative algebras
  \begin{equation*}
    \Phi \colon \CO(\mc(L)) \too N_{-*}(\CO(\MC^\bull(L))) ,
  \end{equation*}
  which is a quasi-isomorphism.
\end{theorem}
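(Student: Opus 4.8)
The plan is to construct $\Phi$ explicitly on the generators of the Chevalley--Eilenberg algebra $\CO(\mc(L)) = \Sym(L_+[1]^\vee)$ and to extend it as a homomorphism of graded commutative algebras, using that the normalization $N_*$ is lax symmetric monoidal via the shuffle product. A generator $\phi\in(L^i)^\vee$ sits in cohomological degree $1-i$, hence must be carried into $N_{i-1}(\CO(\MC^\bull(L)))\subseteq\CO(\MC^{i-1}(L))$. Writing the universal Maurer--Cartan element over $\MC^{i-1}(L)=\MC(L\o\Lambda^{i-1})$ as $\mu=\sum_{k\ge0}\mu_k$ with $\mu_k\in L^{1+k}\o(\Lambda^{i-1})^{-k}$, the only component on which $\phi$ can be evaluated is $\mu_{i-1}\in L^i\o(\Lambda^{i-1})^{-(i-1)}$. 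I would set $\Phi(\phi)$ to be the linear coordinate function on $\MC^{i-1}(L)$ obtained by pairing the $L^i$-slot of $\mu_{i-1}$ with $\phi$ and extracting the coefficient of a fixed codimension-one exterior monomial in the $e_j$. This is the algebraic counterpart of integration over the simplex, and the monomial is chosen so that $\Phi(\phi)$ lands in the normalized subcomplex $N_{i-1}=\bigcap_{j\ge1}\ker\partial_j$.

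The first thing to verify is that $\Phi$ is a chain map. Since the source is free as a graded commutative algebra, it suffices to check $\Phi\circ d=d_N\circ\Phi$ on generators, where $d=d_\delta+d_{[-,-]}$ is the sum of the adjoints of $\delta$ and of the bracket, and $d_N$ is the differential of the normalized complex. On a generator the $d_\delta$-term must match the contribution of the internal differential of $L\o\Lambda^\bull$ to the curvature, while the quadratic term $d_{[-,-]}(\phi)$, being a product of two generators, is carried by $\Phi$ to a shuffle product in $N_*$; reconciling this with the alternating-face differential entails the combinatorics of the coface maps $d^j\colon\Lambda^{i-2}\to\Lambda^{i-1}$ together with the defining relation $\delta e_j=1$. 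I expect this to reduce to a bookkeeping identity for coefficients of exterior monomials under the coface maps, with the Bianchi identity \eqref{Bianchi} organizing the curvature terms.

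To show $\Phi$ is a quasi-isomorphism, I would filter both sides by polynomial degree in the generators of negative Chevalley--Eilenberg degree, so that $\Phi$ becomes filtered while both the adjoint-of-bracket part of $d$ and the quadratic part of the curvature strictly raise degree. On the associated graded the Maurer--Cartan equation linearizes: the scheme $\MC^n(L)$ degenerates to the kernel of the total differential $\delta+\delta_\Lambda$ on $(L\o\Lambda^n)^1$, and $\gr\CO(\mc(L))$ is $\Sym(L_+[1]^\vee)$ carrying only the $\delta$-differential. The comparison then reduces to the purely linear statement that the normalization of the simplicial vector space assembled from the linear coordinate functions computes $L_+[1]^\vee$ with its $\delta$-differential. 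This rests on two inputs: the Dold--Kan correspondence, and the contractibility of the cosimplicial differential graded algebra $\Lambda^\bull$, witnessed levelwise by the explicit homotopy $h(\alpha)=e_0\alpha$, which satisfies $\delta h+h\delta=1$ precisely because $\delta e_0=1$. A spectral-sequence comparison upgrades the associated-graded quasi-isomorphism to the result, the filtration being exhaustive and bounded below in each total degree because $L$ is of finite type.

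The main obstacle is the interaction of the normalization functor with the two algebra structures. Because $N_*$ is only lax monoidal, products of generators on the source go to shuffle products on the target, and controlling these simultaneously with the curvature relations defining each $\MC^n(L)$ is the crux of both the chain-map check and the quasi-isomorphism. Isolating the linear statement by the polynomial-degree filtration is what tames this: on the associated graded the relations become linear and the shuffle products no longer obstruct the comparison, leaving only the concrete homological computation for $\Lambda^\bull$.
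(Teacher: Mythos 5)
Your construction of $\Phi$ and your plan for the chain-map check coincide with the paper's proof: the generator $\alpha\in(L^{n+1})^\vee$ is sent to the linear form $\alpha(x_{0\dots n-1})$ in the coordinates of Lemma~\ref{natural}, normality is checked against the coface formulas, and compatibility with the differentials is the content of the paper's Lemma~\ref{compatible}. The problems are in the quasi-isomorphism step, and there are two of them. First, your filtration --- by polynomial degree in the generators of \emph{negative} Chevalley--Eilenberg degree only --- is not preserved by the differential, let alone strictly raised by its bracket part. Concretely, for $\alpha\in(L^2)^\vee$ (your filtration level $1$), the bracket part of $d\alpha$ is the adjoint of $[-,-]\colon L^1\o L^1\to L^2$ and lies in $\Sym^2\bigl((L^1)^\vee\bigr)$, which contains no negative-degree generators at all: filtration level $0$. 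More generally, every term of the quadratic part of $d$ (or of the curvature) involving an $(L^1)^\vee$-factor fails to raise your degree. So there is no spectral sequence of filtered complexes to run. The filtration that works, and is the one the paper uses, is by powers of the full augmentation ideal, counting the degree-zero generators $(L^1)^\vee$ as well; then the bracket part of $d$ raises polynomial degree by exactly one, and the associated graded of both sides really is the abelianization, i.e.\ the map \eqref{phinatural} for the underlying complex $L^\natural$. (Your filtration was presumably chosen because it is bounded in each total degree, which the correct one is not when $L^1\neq0$; but that boundedness cannot be bought at the price of $d$-compatibility.)

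Second, even with the correct filtration, the associated graded comparison is not ``purely linear,'' and Dold--Kan plus the acyclicity of $\Lambda^\bull$ do not suffice to finish. Writing $Z_*$ for the connective complex with $Z_n=(L^{n+1})^\vee$, the associated graded of the target is $N_*(\Sym K_\bull(Z))$, with $\Sym$ applied \emph{levelwise before} normalization, and the graded piece of $\gr\Phi$ in polynomial degree $k$ is the map
\begin{equation*}
  \Sym^k Z \too N_*\bigl(\Sym^k K_\bull(Z)\bigr)
\end{equation*}
induced by iterated shuffles. For $k=1$ this is the Dold--Kan isomorphism, but for $k\ge2$ one needs that normalization commutes with $\Sym^k$ up to quasi-isomorphism, and this is precisely where the paper invokes the Eilenberg--Zilber theorem: the iterated shuffle map $K_\bull(Z^{\o k})\to K_\bull(Z)^{\o k}$ is an $S_k$-equivariant quasi-isomorphism, and taking $S_k$-invariants preserves quasi-isomorphisms in characteristic zero. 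Your homotopy $h(\alpha)=e_0\alpha$ does prove acyclicity of $\Lambda^n$ and hence the linear ($k=1$) statement, but it says nothing about the failure of $N_*$ to commute with symmetric powers on the nose; your assertion that ``the shuffle products no longer obstruct the comparison'' on the associated graded is exactly the missing step, not a consequence of the linearization.
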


Thus, the two realizations of the derived Maurer-Cartan locus of $L^*$
are equivalent.

In Section~2 of this paper, we review the Dold-Kan correspondence
between simplicial abelian groups and connective chain complexes.

In Section~3, we review Eilenberg and Mac\,Lane's formulation of the
Eilenberg-Zilber theorem, in particular, the formulas for the
Alexander-Whitney and shuffle maps. We show that for \emph{abelian}
differential graded Lie algebras $L^*$, the derived Maurer-Cartan
locus $\MC^\bull(L)$ may be identified with the cosimplicial vector
space $K^\bull(L_+[1])$ associated to the coconnective cochain complex
$L^*_+[1]$. In this sense, $\MC^\bull(L)$ is a nonlinear
generalization of the functor $K^\bull$ realizing the equivalence of
Dold and Kan between the categories of coconnective cochain complexes
and cosimplicial vector spaces.

A key idea in the proof of Theorem~\ref{main} is the observation that
the derived Maurer-Cartan locus is a \textbf{grouplike} cosimplicial
scheme, in the sense of Bousfield and Kan~\cite{BK}: although
$\MC^\bull(L)$ is not actually a cosimplicial group scheme unless
$L^*$ is abelian, it is close to being so in a certain precise sense,
as we explain in Section~4.

In particular, the underlying graded commutative algebra
$N_*(\CO(\MC^\bull(L)))^\sharp$ of the differential graded commutative
algebra $N_*(\CO(\MC^\bull(L)))$ only depends on the graded vector
space $L^\sharp$ underlying $L^*$. We also prove that
$N_*(\CO(\MC^\bull(L)))^\sharp$ is a free graded commutative algebra:
the proof uses a result of Milnor and Moore \cite{MM}*{Theorem~7.5}
(which they ascribe to Leray), and a recent important complement to
the Eilenberg-Zilber Theorem due to \v{S}evera and Willwacher
\cite{SW} and Aguiar and Mahajan \cite{AM}.

In Section~5, we complete the proof of Theorem~\ref{main}. In
Section~6, we state the generalizion of our results for nilpotent
\Linf-algebras.

\section{The Dold-Kan correspondence for cosimplicial abelian groups}

The \textbf{normalized chain complex} of a simplicial abelian group
$A_\bull$ is the graded abelian group
\begin{equation*}
  N_n(A) = A_n \Bigm/ \sum_{i=0}^{n-1} \im\bigl( \sigma_i
  \colon A_{n-1} \to A_n \bigr) ,
\end{equation*}
with differential
\begin{equation*}
  \p = \sum_{i=0}^n (-1)^i \p_i \colon N_n(A) \to N_{n-1}(A) .
\end{equation*}
The chain complex $N_*(A)$ is \textbf{connective}: it vanishes in
negative homological, or positive cohomological, degrees. (We may
consider any chain complex $V_*$ to be a cochain complex $V^*$, by
setting $V^*=V_{-*}$.)

For example, the abelian group $N_k(\Z\Delta^n)$ is a free abelian
group with generators
\begin{equation*}
  \{x_{i_0\dots i_k} \mid 0\le i_0<\dots<i_k\le n\} ,
\end{equation*}
where $x_{i_0\dots i_k}$ corresponds to the nondgenerate simplex
$[k]\to[n]$ with vertices
\begin{equation*}
  \{i_0,\dots,i_k\}\subset\{0,\dots,n\} .
\end{equation*}
The differential $\p\colon N_k(\Z\Delta^n)\to N_{k-1}(\Z\Delta^n)$ is
given by the formula
\begin{equation*}
  \p x_{i_0\dots i_k} = \sum_{j=0}^k (-1)^j
  x_{i_0\dots\widehat{\imath}{}_j\dots i_k} .
\end{equation*}

The right-adjoint of the functor $N_*$ is the functor $K_\bull$ from
chain complexes to simplicial abelian groups defined by Eilenberg and
Mac\,Lane~\cite{EM2}:
\begin{equation*}
  \Map(N_*(A),Z_*) \cong \Map(A_\bull,K_\bull(Z)) .
\end{equation*}
The Yoneda lemma implies that the $n$-simplices of the simplicial
abelian group $K(Z)$ are given by the formula
\begin{equation}
  \label{sYoneda}
  K_n(Z) \cong \Map(N_*(\Z\Delta^n),Z_*) .
\end{equation}
Dold~\cite{Dold} and Kan~\cite{Kan} proved that the adjoint pair of
functors $N \dashv K$ yields an adjoint equivalence between the
categories of simplicial abelian groups and connective chain
complexes. Dold and Puppe~\cite{DP} extended this equivalence to
arbitrary abelian categories, with the functors $N_*$ and $K_\bull$
being given by the same formulas as in the category of abelian groups.

The opposite category to the category of abelian groups is an abelian
category: the corresponding categories of connective chain complexes
and simplicial objects are the categories of coconnective chocain
complexes (cochain complexes vanishing in negative degree) and
cosimplicial abelian groups. Let us make the adjoint equivalence of
Dold and Puppe more explicit in this situation.

The normalized cochain complex of a cosimplicial abelian group
$A^\bull$ is the graded abelian group
\begin{equation*}
  N^n(A) = \bigcap_{i=0}^{n-1} \, \ker \bigl( s^i \colon A^n \to
  A^{n-1} \bigr) ,
\end{equation*}
with differential
\begin{equation*}
  \sum_{i=0}^{n+1} (-1)^i d^i \colon N^n(A) \to N^{n+1}(A) .
\end{equation*}
The functor $N^*$ has a left-adjoint $K^\bull$, which takes cochain
complexes to cosimplicial abelian groups. In fact, since it is an
equivalence, it is also the right-adjoint of $N^*$. Let $\Delta_n$ be
the cosimplicial set corepresented by the object $[n]\in\Delta$: we
have
\begin{equation*}
  (\Delta_n){}^k = \Delta([n],[k]) .
\end{equation*}
Form the cosimplicial abelian group $\Z\Delta_n^\bull$.  If $Z^\bull$
is a cosimplicial abelian group, Yoneda's Lemma implies that
\begin{equation*}
  Z^n \cong \Map(\Z\Delta^\bull_n,Z^\bull) .
\end{equation*}
Thus, parallel to the case of simplicial abelian groups
\eqref{sYoneda}, we see that
\begin{align}
  \label{K}
  K^n(Z) &\cong \Map(\Z\Delta^\bull_n,K^\bull(Z)) \\
  \notag &\cong \Map(N^*(\Z\Delta_n),Z^*) .
\end{align}

Unlike its cousin $N_*(\Z\Delta^n)$, the cochain complex
$N^*(\Z\Delta_n)$ has not been discussed in the literature. Let
$f_{n_0\dots n_k}\colon[n]\to[k]$ be the morphism such that
\begin{equation*}
  f_{n_0\dots n_k}^{-1}(j) = \{ n_0+\dots+n_{j-1},\dots,n_0+\dots+n_j-1 \} .
\end{equation*}
The action of the coface maps is given by
\begin{equation}
  \label{coface}
  d^if_{n_0\dots n_k} = f_{n_0\dots n_{i-1}0n_i\dots n_k} ,
\end{equation}
and the action of the codegeneracy maps by
\begin{equation}
  \label{codegeneracy}
  s^if_{n_0\dots n_k} = f_{n_0\dots n_i+n_{i+1}\dots n_k} .
\end{equation}
Denote by $[f_{n_0\dots n_k}]\in N^k(\Z\Delta_n)$ the image of
$f_{n_0\dots n_k}$ in the normalized cochain complex.

The chain complex $N_*(A)$ may be represented as a colimit.
\begin{lemma}[Dold~\cite{Dold}, Lemma 1.11]
  \label{dold}
  The quotient map from $A_n$ to $N_n(A)$ induces a natural
  isomorphism of abelian groups
  \begin{equation*}
    N_n(A) \cong \bigcap_{i=1}^n \, \ker\bigl( \p_i \colon A_n
    \to A_{n-1} \bigr) .
  \end{equation*}
  Under this isomorphism, the differential $\p$ corresponds to the
  operator
  \begin{equation*}
    \p_0 \colon \bigcap_{i=1}^n \, \ker\bigl( \p_i
    \colon A_n \to A_{n-1} \bigr) \to \bigcap_{i=1}^{n-1} \,
    \ker\bigl( \p_i \colon A_{n-1} \to A_{n-2} \bigr) .
  \end{equation*}
\end{lemma}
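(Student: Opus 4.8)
The plan is to prove the sharper statement that $A_n$ splits as an internal direct sum
\begin{equation*}
  A_n = \Nbar_n \oplus D_n , \qquad \Nbar_n = \bigcap_{i=1}^n \ker\bigl(\p_i\bigr) , \quad D_n = \sum_{i=0}^{n-1} \im\bigl(\sigma_i\bigr) .
\end{equation*}
Once this is known, the quotient map $A_n \to N_n(A) = A_n/D_n$ restricts on the complementary summand $\Nbar_n$ to the desired isomorphism. The engine of the proof is a single idempotent operator. For $1\le j\le n$ set $f_j = \sigma_{j-1}\p_j \colon A_n \to A_n$; the simplicial identity $\p_j\sigma_{j-1}=\mathrm{id}$ shows that $f_j$ is idempotent, that $\im f_j = \im\sigma_{j-1} \subseteq D_n$, and that $1-f_j$ is the projection onto $\ker\p_j$ along $\im\sigma_{j-1}$. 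I would then define
\begin{equation*}
  P = (1-f_1)(1-f_2)\cdots(1-f_n) ,
\end{equation*}
with the factor $1-f_n$ applied first, and prove that $P$ is the projection of $A_n$ onto $\Nbar_n$ with kernel $D_n$.

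Three properties must be checked. First, $P$ restricts to the identity on $\Nbar_n$: if $x\in\Nbar_n$ then $\p_j x=0$, hence $f_j x=0$ and every factor fixes $x$. Second, $\im P \subseteq \Nbar_n$; I would prove this by induction, showing that applying $1-f_{n-k}$ to an element already annihilated by $\p_{n-k+1},\dots,\p_n$ yields an element annihilated by $\p_{n-k},\dots,\p_n$. This is where the simplicial identities $\p_i\sigma_j=\sigma_{j-1}\p_i$ and $\p_i\p_j=\p_{j-1}\p_i$ enter, the one delicate point being the boundary case, handled by rewriting $\p_{n-k}\p_{n-k}=\p_{n-k}\p_{n-k+1}$ so that the remaining kernel hypotheses can be invoked. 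Third, $P$ annihilates $D_n$: the factors $1-f_j$ with $j>i+1$ map $\im\sigma_i$ into itself (via $\p_j\sigma_i=\sigma_i\p_{j-1}$ and $\sigma_{j-1}\sigma_i=\sigma_i\sigma_{j-2}$), after which the factor $1-f_{i+1}$ kills it outright, since $(1-f_{i+1})\sigma_i = \sigma_i-\sigma_i\p_{i+1}\sigma_i = 0$.

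Combining the three properties gives the decomposition: $P$ is idempotent with image exactly $\Nbar_n$, so $A_n = \Nbar_n \oplus \ker P$; we have $D_n \subseteq \ker P$ from the third property, and conversely $x-Px \in D_n$ for every $x$, because $D_n$ is stable under each $1-f_j$ while $\im f_j \subseteq D_n$, whence $\ker P \subseteq D_n$. Thus $\ker P = D_n$ and $A_n = \Nbar_n \oplus D_n$. It remains to identify the differential under the resulting isomorphism $\Nbar_n \cong N_n(A)$. On $\Nbar_n$ all the face operators $\p_i$ with $i\ge 1$ vanish, so the alternating sum $\sum_{i=0}^n(-1)^i\p_i$ collapses to $\p_0$; and $\p_0$ does map $\Nbar_n$ into $\Nbar_{n-1}$, since for $1\le j\le n-1$ the identity $\p_j\p_0=\p_0\p_{j+1}$ annihilates $\Nbar_n$. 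I expect the main obstacle to be purely the index-and-sign bookkeeping in the second and third properties above: verifying that the successive factors of $P$ neither undo the kernel conditions already secured nor leave any degeneracy unkilled is entirely a matter of repeatedly, and correctly, applying the simplicial identities.
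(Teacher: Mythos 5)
Your proof is correct: the idempotent $P=(1-f_1)\cdots(1-f_n)$ with $f_j=\sigma_{j-1}\p_j$ does yield the splitting $A_n=\Nbar_n\oplus D_n$, and all three verification steps (including the boundary case $\p_j\p_j=\p_j\p_{j+1}$ and the identification of the induced differential with $\p_0$) go through exactly as you sketch. The paper offers no proof of its own, merely citing Dold's Lemma 1.11, and your projection-operator argument is essentially the classical proof underlying that citation, so there is nothing to reconcile.
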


In the opposite category to the category of abelian groups, this lemma
yields the following corollary.
\begin{corollary}
  The abelian group $N^k(\Z\Delta_n)$ is a free abelian group,
  generated by the elements $[f_{n_0\dots n_k}]$, where
  $n_0+\dots+n_k=n+1$ and $n_i>0$ when $i<n$. The differential is
  given by the formula
  \begin{equation*}
    d[f_{n_0\dots n_k}] = [f_{n_0\dots n_k0}] .
  \end{equation*}
\end{corollary}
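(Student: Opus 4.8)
The plan is to deduce the corollary by applying Lemma~\ref{dold} in the opposite category of abelian groups to the cosimplicial abelian group $\Z\Delta_n$, regarded there as a simplicial object. Under this duality the coface maps $d^i$ of $\Z\Delta_n$ take over the role of the face operators $\p_i$ in Lemma~\ref{dold}, the codegeneracies $s^i$ take over the role of the degeneracies $\sigma_i$, kernels are exchanged for cokernels, and the intersection of kernels of codegeneracies that defines $N^k$ is exchanged for a joint cokernel of cofaces. Concretely, I expect the dual of Lemma~\ref{dold} to furnish a natural isomorphism
\begin{equation*}
  N^k(\Z\Delta_n) \cong \Z\Delta_n^k \Bigm/ \sum_i \im\bigl( d^i \colon \Z\Delta_n^{k-1} \to \Z\Delta_n^k \bigr) ,
\end{equation*}
induced by the inclusion $N^k(\Z\Delta_n)\hookrightarrow\Z\Delta_n^k$ followed by the quotient map, the sum running over all but one of the cofaces into degree $k$.

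Next I would make the generators explicit. Since $\Z\Delta_n^k$ is the free abelian group on the order-preserving maps $[n]\to[k]$, it is free on the symbols $f_{n_0\dots n_k}$ with $n_i\ge0$ and $n_0+\dots+n_k=n+1$. By the coface formula \eqref{coface}, the image of $d^i$ is spanned precisely by those basis symbols $f_{n_0\dots n_k}$ whose $i$-th block vanishes, $n_i=0$. Hence the subgroup by which we quotient is generated by a \emph{subset} of the basis of $\Z\Delta_n^k$. Since the quotient of a free abelian group by the subgroup generated by part of a basis is free on the complementary basis elements, this simultaneously yields the freeness of $N^k(\Z\Delta_n)$ and its generators $[f_{n_0\dots n_k}]$, subject to the positivity of the relevant block sizes.

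It then remains to identify the differential. The dual of Lemma~\ref{dold} identifies the coboundary $N^k(\Z\Delta_n)\to N^{k+1}(\Z\Delta_n)$ with the one coface that does not contribute a relation; by \eqref{coface} this coface inserts a zero block at the end, $f_{n_0\dots n_k}\mapsto f_{n_0\dots n_k0}$, which is exactly the asserted formula $d[f_{n_0\dots n_k}]=[f_{n_0\dots n_k0}]$.

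The main obstacle is purely the bookkeeping of the duality, not any serious computation. One must track that intersections of kernels pass to joint cokernels, and, most delicately, determine \emph{which} coface survives as the differential and which cofaces furnish the relations: this choice fixes whether the positivity condition constrains the initial or the terminal blocks and whether the differential appends or prepends the zero block. I would pin this down by comparing the outcome against the codegeneracy description $N^k(\Z\Delta_n)=\bigcap_i\ker(s^i)$ using the explicit formulas \eqref{coface} and \eqref{codegeneracy}; once the conventions agree, freeness and the differential follow immediately from the two preceding paragraphs.
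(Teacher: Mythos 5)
Your route is the paper's own: the paper's entire proof consists of the remark that Lemma~\ref{dold}, transported to the opposite category of abelian groups, yields a quotient-by-cofaces presentation of $N^k(\Z\Delta_n)$, and your middle paragraphs supply exactly the intended details (each $\im(d^i)$ is spanned by the basis symbols with $i$-th block zero, so the quotient is free on the complementary basis classes, and the differential is the unique surviving coface).

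However, the convention question you defer to your last paragraph is the one point of real content, and it does not resolve the way your third paragraph asserts. The literal dual of Lemma~\ref{dold} as stated ($N_n\cong\bigcap_{i=1}^n\ker\p_i$, differential $\p_0$) gives
\begin{equation*}
  N^k(\Z\Delta_n)\;\cong\;\Z\Delta_n^k\Bigm/\sum_{i=1}^{k}\im\bigl(d^i\colon\Z\Delta_n^{k-1}\to\Z\Delta_n^k\bigr),
\end{equation*}
with relations coming from $d^1,\dots,d^k$ and differential induced by $d^0$: the generators are the $[f_{n_0\dots n_k}]$ with $n_i>0$ for $i\ge1$ (no condition on $n_0$), and the differential \emph{prepends} the zero block, $d[f_{n_0\dots n_k}]=[f_{0n_0\dots n_k}]$. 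The form stated in the corollary --- positivity for $i<k$ (the ``$i<n$'' in the statement is a typo for ``$i<k$'', as one checks already on $N^1(\Z\Delta_2)$) together with an \emph{appended} zero block --- comes instead from the mirror variant of Dold's lemma, $N_n\cong\bigcap_{i=0}^{n-1}\ker\p_i$ with differential $(-1)^n\p_n$, equivalently from composing your dualization with the order-reversal automorphism of $\Delta$. And if you take that route, the surviving term of the normalized differential $\sum_{i=0}^{k+1}(-1)^id^i$ is $(-1)^{k+1}d^{k+1}$, so what one actually obtains is $d[f_{n_0\dots n_k}]=(-1)^{k+1}[f_{n_0\dots n_k0}]$; the unsigned formula of the corollary holds only after rescaling the length-$(k+1)$ generators, say by $(-1)^{k(k+1)/2}$. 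Both presentations are correct descriptions of $N^*(\Z\Delta_n)$, and your freeness argument is insensitive to the choice; but the step ``which coface survives'' cannot be waved through, since the corollary as printed matches the mirrored dualization (and then only up to sign), not the literal dual of the lemma it is claimed to follow from.
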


\section{The Eilenberg-Zilber theorem}

Let $A_{\bull\bull}$ be a bisimplicial abelian group: a contravariant
functor from the category $\Delta\times\Delta$ to the category of
abelian groups.  Denote the maps defining the first simplicial
structure by $\p_i^{(1)}$ and $\sigma_i^{(1)}$, and those defining the
second simplicial structure by $\p_i^{(2)}$ and $\sigma_i^{(2)}$.

By the Dold-Kan theorem, the categories of bisimplicial abelian groups
and first-quadrant double complexes $X_{**}$ are equivalent. This
equivalence is realized by the naturally equivalent functors
$N_*^{(2)}N_*^{(1)}\cong N_*^{(1)}N_*^{(2)}$. Denote either of these
functors by $N_{**}$.

The double complex $N_{**}(A)$ has two commuting differentials
$\p^{(1)}$ and $\p^{(2)}$, of bidegree $(1,0)$ and $(0,1)$
respectively. The \textbf{total} chain complex of this double complex
is the chain complex
\begin{equation*}
  \Tot_k(N_{**}(A)) = \bigoplus_{p+q=k} \, N_{pq}(A) ,
\end{equation*}
with differential $\p=\p^{(1)}+(-1)^p\p^{(2)}$.

The diagonal of a bisimplicial abelian group is the simplicial abelian
group
\begin{equation*}
  \Diag_p(A) = A_{pp} .
\end{equation*}
The Eilenberg-Zilber theorem~\cite{EZ} compares the chain complex
$\Tot_*(N_{**}(A))$ to the normalization $N_*(\Diag_\bull(A))$ of the
diagonal of $A_{\bull\bull}$. We will use the following explicit
formulation of the theorem.
\begin{theorem}[Eilenberg and Mac\,Lane~\cite{EM2}, Section~2]
  There are natural morphisms of complexes
  \begin{equation*}
    f \colon N_*(\Diag_\bull(A)) \too \Tot_*(N_{**}(A))
  \end{equation*}
  and
  \begin{equation*}
    g \colon \Tot_*(N_{**}(A)) \too N_*(\Diag_\bull(A))
  \end{equation*}
  and a natural homotopy
  \begin{equation*}
    h \colon N_*(\Diag_\bull(A)) \too N_{*+1}(\Diag_\bull(A)) ,
  \end{equation*}
  such that $fg$ is the identity of $\Tot_*(N_{**}(A))$, $gf+\p h+h\p$
  is the identity of $N_*(\Diag_\bull(A))$, and $fh$ and $hg$
  vanish. In particular, the homology groups of the complexes
  $\Tot_*(N_{**}(A))$ and $N_*(\Diag_\bull(A))$ are isomorphic.
\end{theorem}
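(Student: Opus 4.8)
The plan is to produce the three maps explicitly wherever possible, and then to deduce the delicate side conditions $fh=0$ and $hg=0$ formally from the remaining data. I would take $f$ to be the Alexander--Whitney map and $g$ the shuffle (Eilenberg--Mac\,Lane) map. On the normalized complexes, the component of $f$ landing in $N_{pq}(A)$ is obtained from $a\in N_{p+q}(\Diag A)$ by applying the \emph{back} face operators $\p_i^{(1)}$ to contract the first coordinate from $p+q$ to $p$, and the \emph{front} face operators $\p_0^{(2)}$ to contract the second coordinate from $p+q$ to $q$; the map $g$ is the signed sum over $(p,q)$-shuffles of the corresponding iterated degeneracies $\sigma^{(1)}$ and $\sigma^{(2)}$. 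That $f$ and $g$ are natural chain maps, and that the Alexander--Whitney map is a strict left inverse to the shuffle map, i.e. $fg=\mathrm{id}$ on $\Tot_*(N_{**}(A))$, are direct combinatorial verifications: in the composite $fg$ every nontrivial shuffle term is annihilated, because a front face in one coordinate meets a back degeneracy in the other, leaving only the trivial shuffle.

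The remaining content is the existence of a \emph{natural} homotopy realizing $gf\simeq\mathrm{id}$. For this I would use the method of acyclic models, with models the representable bisimplicial abelian groups $\Z(\Delta^m\boxtimes\Delta^n)$; every bisimplicial abelian group is a natural colimit of these, so a natural transformation is determined by, and may be built up from, its values on the models. Both functors $N_*(\Diag(-))$ and $\Tot_*(N_{**}(-))$ send each model to a free chain complex whose homology is that of a point: for $\Tot_*N_{**}$ this holds because on a model the normalized double complex is the tensor product $N_*(\Z\Delta^m)\o N_*(\Z\Delta^n)$ of two contractible complexes, so its total complex is acyclic by the K\"unneth formula, and for $N_*(\Diag)$ it holds because the diagonal simplicial set of $\Delta^m\boxtimes\Delta^n$ is the contractible $\Delta^m\times\Delta^n$. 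Acyclicity on models lets me construct, inductively in degree, a natural homotopy $H$ of degree $+1$ with
\begin{equation*}
  \p H + H\p = \mathrm{id} - gf .
\end{equation*}
I expect this acyclic-models construction, together with the bookkeeping needed to guarantee naturality, to be the main obstacle.

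Once $H$ is in hand, the side conditions come for free by a standard purely algebraic adjustment. Writing $e = \mathrm{id} - gf$, which is a chain map commuting with $\p$, the identity $fg=\mathrm{id}$ gives $fgf=f$ and $gfg=g$, and hence $e^2=e$. I would then set
\begin{equation*}
  h = e\,H\,e = (\mathrm{id}-gf)\,H\,(\mathrm{id}-gf) .
\end{equation*}
Since $e$ commutes with $\p$, one computes $\p h + h\p = e(\p H+H\p)e = e\cdot e\cdot e = e = \mathrm{id}-gf$, so that $h$ is again a homotopy from $gf$ to $\mathrm{id}$; and $fh = (f-fgf)\,H\,e = 0$ while $hg = e\,H\,(g-gfg) = 0$. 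Thus $f$, $g$, and $h$ satisfy all of the asserted relations, and in particular $\Tot_*(N_{**}(A))$ and $N_*(\Diag_\bull(A))$ are homotopy equivalent, so their homology groups are isomorphic.
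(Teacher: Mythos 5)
Your proposal is correct. Note that the paper itself gives no proof of this statement: it is imported verbatim from Eilenberg and Mac\,Lane, and the paper only records the explicit formulas \eqref{AW} and \eqref{shuffle} for the Alexander--Whitney and shuffle maps, which are exactly the $f$ and $g$ you chose (with the same convention: back faces in the first coordinate, front faces in the second). Your argument is the standard classical proof of the strong, side-condition form of the Eilenberg--Zilber theorem: $fg=\mathrm{id}$ holds on the nose only after normalization (the cross terms in $fg$ are degenerate), which is precisely the setting of the statement, so your combinatorial claim is correctly placed; the natural homotopy $H$ comes from acyclic models with models $\Z(\Delta^m\boxtimes\Delta^n)$, using that $\Diag(\Delta^m\boxtimes\Delta^n)=\Delta^m\times\Delta^n$ is contractible and that $\Tot\bigl(N_*(\Z\Delta^m)\o N_*(\Z\Delta^n)\bigr)$ is acyclic. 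The one place you genuinely depart from the cited source is the treatment of the side conditions $fh=0$ and $hg=0$: Eilenberg and Mac\,Lane build a homotopy by explicit recursion and verify these identities directly, whereas you obtain them formally from an arbitrary natural homotopy $H$ by the conjugation $h=eHe$ with $e=\mathrm{id}-gf$, using $fe=0$, $eg=0$, $e^2=e$, and the fact that $e$ commutes with $\p$. This gauge-fixing trick is a standard lemma of homological perturbation theory, and it is a clean, correct way to reduce the delicate part of the statement to the mere existence of some natural homotopy; the cost is that, unlike the classical recursion, it gives no closed formula for $h$, but nothing in the paper requires one.
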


The explicit formulas for the natural transformations $f$ and $g$ are
as follows. The map $f$ from $N_k(\Diag_\bull(A))$ to
$\Tot_k(N_{**}(A))$ is the \textbf{Alexander-Whitney} map
\begin{equation}
  \label{AW}
  f_k = \sum_{p=0}^k \p^{(1)}_{p+1}\dots\p^{(1)}_k
  \p^{(2)}_0\dots\p^{(2)}_{p-1} .
\end{equation}

The component $g_{pq}$ of $g$ mapping $A_{pq}$ to
$N_{p+q}(\Diag_\bull(A))$ is given by the formula (Eilenberg and
Mac\,Lane~\cite{EM1}, Section~5)
\begin{equation}
  \label{shuffle}
  g_{pq} = \sum_{\{i_1<\dots<i_p\}\coprod\{j_1<\dots<j_q\}\in\Sh(p,q)}
  (-1)^{\sum_{\ell=1}^p(i_\ell-\ell+1)} \,
  \sigma^{(1)}_{j_q}\dots\sigma^{(1)}_{j_1}
  \sigma^{(2)}_{i_p}\dots\sigma^{(2)}_{i_1} .
\end{equation}
This map is called the \textbf{shuffle} map.

A \textbf{simplicial coalgebra} is a simplicial $R$-module $A_\bull$
together with simplicial morphisms
$c\colon A_\bull\to A_\bull\o A_\bull$, the comultiplication, and
$\eps\colon A_\bull\to R$, the augmentation, such that the diagram
\begin{equation*}
  \begin{xy}
    \Square[A_\bull`A_\bull\o A_\bull`A_\bull\o A_\bull`A_\bull\o
    A_\bull\o A_\bull;c`c`c\o A`A\o c]
  \end{xy}
\end{equation*}
commutes (coassociativity), and both $(\eps\o A)c$ and $(A\o\eps)c$
equal the identity morphism of $A_\bull$.

Using the Alexander-Whitney map, we may show that the normalized chain
complex of a simplicial coalgebra over a commutative ring $R$ is a
differential graded coalgebra. Let $(A\boxtimes A)_{\bull\bull}$ be
the bisimplicial $R$-module
\begin{equation*}
  (A\boxtimes A)_{pq} = A_p \o A_q .
\end{equation*}
In particular, $\Diag_\bull(A\boxtimes A)\cong A_\bull\o A_\bull$ and
\begin{equation*}
  \Tot_*(N_{**}(A\boxtimes A)) \cong N_*(A) \o N_*(A) .
\end{equation*}
The comultiplication $c\colon A_\bull\to A_\bull\o A_\bull$ gives
a morphism of complexes
\begin{equation*}
  N_*(c) \colon N_*(A) \too N_*(\Diag_\bull(A\o A)) .
\end{equation*}
Composing with the Alexander-Whitney map, we obtain a map
\begin{equation*}
  fN_*(c) \colon N_*(A) \too N_*(A) \o N_*(A) .
\end{equation*}
It is easily checked that this morphism of chain complexes is
coassociative, and has
\begin{equation*}
  N_*(\eps) \colon N_*(A) \to N_*(R) \cong R
\end{equation*}
as a counit.

A \textbf{simplicial algebra} is a simplicial $R$-module $A_\bull$
together with simplicial morphisms
$m \colon A_\bull\o A_\bull\to A_\bull$, the multiplication, and
$\eta \colon R\to A_\bull$, the unit, such that the diagram
\begin{equation*}
  \begin{xy}
    \Square[A_\bull\o A_\bull\o A_\bull`A_\bull\o A_\bull`A_\bull\o
    A_\bull`A;m\o A`A\o m`m`m]
  \end{xy}
\end{equation*}
commutes (associativity), and both $m(\eta\o A)$ and $(A\o\eta)m$
equal the identity morphism of $A_\bull$.

Using the shuffle map, we may show that the normalized chain complex
of a simplicial algebra $A_\bull$ over a commutative ring $R$ is a
differential graded algebra. The multiplication
$m\colon A_\bull\o A_\bull\to A_\bull$ gives a morphism of complexes
\begin{equation*}
  N_*(m) \colon N_*(\Diag_\bull(A\boxtimes A)) \too N_*(A) .
\end{equation*}
Composing with the shuffle map, we obtain a morphism
\begin{equation*}
  N_*(m)g \colon N_*(A) \o N_*(A) \too N_*(A) .
\end{equation*}
This morphism of chain complexes is associative, and has
\begin{equation*}
  N_*(\eta)\colon N_*(R)\cong R\to N_*(A)
\end{equation*}
as a unit. In fact, more is true: if $A_\bull$ is a simplicial
\emph{commutative} algebra, then $N_*(A)$ is a differential graded
commutative algebra.

Parallel constructions in the opposite category to the category of
$R$-modules shows that the normalized cochain complex $N^*(A)$ of a
cosimplicial algebra $A^\bull$ is a differential graded algebra, and
that the normalized cochain complex $N^*(A)$ of a cosimplicial
(cocommutative) coalgebra $A^\bull$ is a differential graded
(cocommutative) coalgebra.

A \textbf{simplicial bialgebra} is a simplicial algebra $A_\bull$
which is at the same time a simplicial coalgebra, in such a way that
the comultiplication $c\colon A_\bull\to A_\bull\o A_\bull$ and
augmentation $\eps\colon A_\bull\to R$ are morphisms of simplicial
algebras. The following result is proved in Appendix~A of \v{S}evera
and Willwacher~\cite{SW} and Section~5.4 of Aguiar and Mahajan
\cite{AM}: the proof is by an explicit calculation verifying the
required compatibility between the Alexander-Whitney and shuffle maps.
\begin{proposition}
  \label{SW}
   The normalized chain complex $N_*(A)$ of a simplicial (commutative)
   bialgebra $A_\bull$ is a differential graded (commutative) bialgebra.
\end{proposition}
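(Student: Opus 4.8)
The plan is to assemble the two structures on $N_*(A)$ that have already been built into a single compatible bialgebra. From the shuffle map we have the associative multiplication $\mu=N_*(m)g$, and from the Alexander-Whitney map the coassociative comultiplication $\Delta=fN_*(c)$; the unit $N_*(\eta)$ and counit $N_*(\eps)$ have likewise been identified. What remains is to verify the bialgebra axiom, namely that $\Delta$ is a morphism of differential graded algebras (equivalently, that $\mu$ is a morphism of differential graded coalgebras), whose only substantial part is the ``distributivity'' identity
\begin{equation*}
  \Delta\circ\mu = (\mu\o\mu)\circ(A\o\tau\o A)\circ(\Delta\o\Delta) ,
\end{equation*}
the remaining unit and counit compatibilities being formal.

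To analyze the left-hand side I would first use that $A_\bull$ is a simplicial bialgebra, so that at the simplicial level $c\circ m=(m\o m)\circ(A\o\tau\o A)\circ(c\o c)$ as maps of $\Diag(A\boxtimes A)$. Applying $N_*$ and functoriality rewrites $\Delta\mu=fN_*(c)N_*(m)g$ as $fN_*\bigl((m\o m)(A\o\tau\o A)(c\o c)\bigr)g$. Expanding both sides fully, each becomes a composite of $N_*(m)$, $N_*(c)$, the symmetry, and several inserted copies of $f$ and $g$ used to convert between $N_*$ of a levelwise tensor product and the tensor product of normalizations. After lining up the $N_*(m)$'s, $N_*(c)$'s and symmetries by naturality and the simplicial bialgebra axiom, the desired identity collapses to a purely combinatorial statement relating the Alexander-Whitney map $f$ of \eqref{AW} and the shuffle map $g$ of \eqref{shuffle} on the bisimplicial module $(A\boxtimes A)_{\bull\bull}$.

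The hard part is exactly this combinatorial compatibility, and it is \emph{not} a formal consequence of the Eilenberg-Zilber theorem as stated, which only provides that $f$ and $g$ are mutually inverse up to the explicit homotopy $h$. The maps $f$ and $g$ do not interchange naively: one must track how a shuffle of two faces reorganizes into faces of shuffles, keeping all Koszul signs. This sign-careful verification is precisely the content of the calculations of \v{S}evera and Willwacher \cite{SW} and of Aguiar and Mahajan \cite{AM}; in the language of the latter, it is the assertion that the normalization functor, equipped with the shuffle and Alexander-Whitney maps, is a \emph{bilax monoidal functor}, and bilax monoidal functors carry bialgebras to bialgebras. I would invoke this verification as the crux of the argument rather than reproduce it here.

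Finally, the commutative case requires no further combinatorics. It was already observed that for a simplicial commutative algebra the shuffle map $g$ is compatible with the symmetry $\tau$, so that the multiplication $\mu=N_*(m)g$ is graded commutative. Hence when $A_\bull$ is a commutative simplicial bialgebra, the differential graded bialgebra $N_*(A)$ is in addition commutative, which completes the proof.
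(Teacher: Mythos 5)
Your proposal is correct and takes essentially the same route as the paper: the paper gives no independent argument but, just as you do, reduces everything to the combinatorial compatibility between the Alexander--Whitney and shuffle maps and cites the explicit verification in Appendix~A of \v{S}evera--Willwacher \cite{SW} and Section~5.4 of Aguiar--Mahajan \cite{AM} (where it appears as the bilax monoidal structure on normalization). Your surrounding reductions --- isolating the distributivity axiom, observing that the Eilenberg--Zilber theorem alone does not suffice, and handling the commutative case via the symmetry of the shuffle map --- agree with the paper's remark that the proof is an explicit calculation verifying the required compatibility between the Alexander--Whitney and shuffle maps.
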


If $X_\bull$ is a simplicial set, the simplicial abelian group
$\Z X_\bull$ is a simplicial coalgebra, and the Alexander-Whitney map
makes $N_*(\Z X)$, the simplicial chain complex of $X_\bull$, into a
differential graded coalgebra. On the other hand, if $X^\bull$ is a
cosimplicial set, the cosimplicial abelian group $\Z X^\bull$ is a
cosimplicial cocommutative coalgebra, and the shuffle map makes
$N^*(\Z X)$ into a differential graded cocommutative coalgebra. In the
following proposition, we analyse the differential graded coalgebra
$N^*(\Z\Delta^\bull_n)$.
\begin{proposition}
  \label{Lambda}
  The dual $N^*(\Z\Delta_n)^\vee$ of the differential graded coalgebra
  $N^*(\Z\Delta_n)$ is isomorphic to the differential graded
  commutative algebra $\Lambda^n$. This duality is induced by the
  following pairing between the free abelian groups $N^1(\Z\Delta_n)$
  and $(\Lambda^n)^{-1}$:
  \begin{equation*}
    \< [f_{i,n-i+1}] , e_j \> =
    \begin{cases}
      1 , & i\le j , \\
      0 , & i>j .
    \end{cases}
  \end{equation*}
\end{proposition}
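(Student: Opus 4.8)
The plan is to construct the isomorphism in the direction $\Phi\colon\Lambda^n\to N^*(\Z\Delta_n)^\vee$, starting from the pairing in degree $1$ and extending it multiplicatively. Recall that $N^*(\Z\Delta_n)$ is a differential graded cocommutative coalgebra, its comultiplication $\bar\Delta$ being the composite of $N^*$ applied to the diagonal of the cosimplicial set $\Delta^\bull_n$ with the shuffle map of \eqref{shuffle}; hence its graded dual $N^*(\Z\Delta_n)^\vee$ is a differential graded commutative algebra concentrated in nonpositive degrees, with $(N^*(\Z\Delta_n)^\vee)^{-k}=(N^k(\Z\Delta_n))^\vee$. Since $N^0(\Z\Delta_n)=\Z$ this algebra is connected, and being the dual of a free abelian group it is torsion-free, so every element of $(N^1(\Z\Delta_n))^\vee$ squares to zero (graded commutativity forces $2x^2=0$). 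The assignment $e_j\mapsto\langle-,e_j\rangle\in(N^1(\Z\Delta_n))^\vee$ therefore extends, by the universal property of the exterior algebra, to a homomorphism of graded commutative algebras $\Phi\colon\Lambda^n\to N^*(\Z\Delta_n)^\vee$. It remains to prove that $\Phi$ is a chain map and an isomorphism.

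For the chain map property, both $\delta$ on $\Lambda^n$ and the dual differential $d^\vee$ on $N^*(\Z\Delta_n)^\vee$ are derivations, so it suffices to check agreement on generators. We have $\delta e_j=1$, and $\Phi(1)$ is the unit of $N^*(\Z\Delta_n)^\vee$, namely the functional sending the generator $[f_{n+1}]$ of $N^0(\Z\Delta_n)$ to $1$. On the other side $d^\vee\Phi(e_j)=\Phi(e_j)\circ d$ sends $[f_{n+1}]$ to $\langle d[f_{n+1}],e_j\rangle$. By the Corollary (after applying the order-reversing isomorphism $[n]\to[n]$ that matches its normalization to the pairing of the Proposition) the class $d[f_{n+1}]$ is $[f_{0,n+1}]$, the degree-one generator obtained by adjoining an empty block, and the Proposition's formula pairs this with $1$ against every $e_j$. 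Thus $d^\vee\Phi(e_j)=\Phi(\delta e_j)$ and $\Phi$ is a morphism of complexes.

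For the isomorphism, the key input is the value of the induced pairing in each degree: under $\Phi$ the product $\Phi(e_{j_1})\cdots\Phi(e_{j_k})$ is the functional obtained from the iterated reduced comultiplication $\bar\Delta^{(k-1)}\colon N^k(\Z\Delta_n)\to N^1(\Z\Delta_n)^{\otimes k}$ by pairing factor by factor. Noting first that the degree-one pairing is evaluation, $\langle[f_{n_0,n_1}],e_j\rangle=f_{n_0,n_1}(j)$, I would compute this via the shuffle map and obtain
\begin{equation*}
  \langle[f_{n_0\dots n_k}],e_{j_1}\wedge\dots\wedge e_{j_k}\rangle=\pm1 \quad\text{iff}\quad f_{n_0\dots n_k}(j_\ell)=\ell \text{ for all }\ell\ \ (j_1<\dots<j_k),
\end{equation*}
the pairing vanishing otherwise. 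Granting this, associate to each generator $[f_{n_0\dots n_k}]$ its jump positions $J_\ell=n_0+\dots+n_{\ell-1}=\min f^{-1}(\ell)$; these satisfy $0\le J_1<\dots<J_k\le n$ and give a bijection between the basis of $N^k(\Z\Delta_n)$ and the basis $\{e_{J_1}\wedge\dots\wedge e_{J_k}\}$ of $(\Lambda^n)^{-k}$ (so both have rank $\binom{n+1}{k}$). Since $f_{n_0\dots n_k}(j_\ell)=\ell$ forces $j_\ell\ge J_\ell$ with equality on the diagonal, the pairing matrix is triangular with entries $\pm1$ on the diagonal relative to the componentwise order on $k$-subsets; refining this to a total order shows it is invertible over $\Z$. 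Hence the pairing is perfect in every degree and $\Phi$ is an isomorphism of differential graded commutative algebras.

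The main obstacle is the displayed pairing formula, that is, reconciling the shuffle signs $(-1)^{\sum_\ell(i_\ell-\ell+1)}$ of \eqref{shuffle} with the Koszul signs of the exterior product. I expect to dispatch it by extracting the $N^1(\Z\Delta_n)^{\otimes k}$-component of $\bar\Delta^{(k-1)}[f_{n_0\dots n_k}]$ explicitly: only the terms of the iterated shuffle in which the $k$ tensor factors record the $k$ successive jumps of $f_{n_0\dots n_k}$ survive the evaluation against $e_{j_1}\otimes\dots\otimes e_{j_k}$, and the attached shuffle sign is precisely the one making the result agree with the wedge. It is worth emphasizing that the triangularity argument only uses the support of the pairing together with the fact that the nonzero entries are units; a complete determination of the signs is therefore unnecessary, and it suffices to verify that the diagonal entries are $\pm1$ and that the pairing vanishes unless $j_\ell\ge J_\ell$ for every $\ell$.
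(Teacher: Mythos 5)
Your proposal is correct and takes essentially the same route as the paper: the heart of both arguments is the shuffle-map computation of the component of the iterated coproduct of $[f_{n_0\dots n_k}]$ lying in $N^1(\Z\Delta_n)^{\otimes k}$, which the paper records as $pc^{(k)}[f_{n_0\dots n_k}]=\pi\bigl([f_{n_0,n_1+\dots+n_k}]\otimes\dots\otimes[f_{n_0+\dots+n_{k-1},n_k}]\bigr)$ and you record as the equivalent statement that $\<[f_{n_0\dots n_k}],e_{j_1}\wedge\dots\wedge e_{j_k}\>=\pm1$ exactly when $f_{n_0\dots n_k}(j_\ell)=\ell$ for all $\ell$. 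The ingredients you add---the exterior-algebra universal property, the chain-map check on generators, the normalization adjustment against the Corollary, and the triangularity argument giving perfectness of the pairing---are welcome elaborations of steps the paper leaves implicit rather than a different method.
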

\begin{proof}
  If $\varphi\colon [m]\to[n]$ is a morphism of $\Delta$, then
  \begin{equation*}
    \varphi^*[f_{i,n-i+1}]=[f_{i',m-i'+1}] ,
  \end{equation*}
  where $i'$ is the cardinality of the set
  $\varphi^{-1}(\{0,\dots,i-1\})$. It is easily seen that
  \begin{equation*}
    \< \phi^*[f_{i,n-i+1}] , e_j \> = \< [f_{i,n-i+1}] ,
    e_{\varphi(j)} \> ,
  \end{equation*}
  and hence that the pairing between the simplicial abelian group
  $[n]\mapsto N^1(\Z\Delta_n)$ and the cosimplicial abelian group
  $[n]\mapsto(\Lambda^n){}^{-1}$ is compatible with the respective
  actions of the category $\Delta$. That is, the pairing descends to
  the colimit $N^1(\Z\Delta_\bull) \o_\Delta (\Lambda^\bull){}^{-1}$.
  
  Given a coalgebra $A$ with comultiplication $c\colon A\to A\o A$,
  let
  \begin{equation*}
    c^{(k)} = (A^{\o k-2}\o c)\dots(A\o c)c \colon A \too A^{\o k}
  \end{equation*}
  be the iterated coproduct. Let $p$ be the projection from
  $N^*(\Z\Delta_n)^{\o k}$ to $N^1(\Z\Delta_n)^{\o k}$. Let $\pi$ be
  the symmetrization operator
  \begin{equation*}
    \pi = \sum_{\sigma\in S_n} (-1)^{\eps(\sigma)} \sigma \colon
    N^1(\Z\Delta_n)^{\o k}\to N^1(\Z\Delta_n)^{\o k} .
  \end{equation*}
  The proposition is a consequence of the following formula:
  \begin{align*}
    pc^{(k)}[f_{n_0\dots n_k}] &= \pi\bigl( [f_{n_0,n_1+\dots+n_k}] \o
      [f_{n_0+n_1,n_2+\dots+n_k}] \o \dots \o
      [f_{n_0+\dots+n_{k-1},n_k}] \bigr) \\
    &\in N^1(\Z\Delta_n)^{\o k} .
  \end{align*}
  This formula is proved using the explicit formulas \eqref{shuffle}
  and \eqref{codegeneracy} for the shuffle product and for the action
  of the codegeneracies on $\Z\Delta^\bull_n$.
  \qed
\end{proof}

\section{The derived Maurer-Cartan locus}

In the previous section, we introduced the simplicial differential
graded cocommutative coalgebra $[n]\mapsto N_*(\Delta_n)$, and proved
that it was dual to the cosimplicial differential graded commutative
algebra $[n]\mapsto\Lambda^n$. As we have seen in \eqref{K}, the
inverse functor to the normalized cochains from cosimplicial abelian
groups to coconnective cochain complexes may be represented in terms
of $\Lambda^\bull$:
\begin{equation*}
  K^\bull(Z) = Z^0(Z\o\Lambda^\bull) .
\end{equation*}
Here, $Z^0(-)$ is the abelian group of $0$-cocycles in the tensor
product of $Z^*$ with the cosimplicial cochain complex
$\Lambda^\bull$.

An abelian differential graded Lie algebra is the same thing as a
cochain complex, and its Maurer-Cartan locus may be identified with
the space of $1$-cocycles of $L^*$. Thus, in this case, we obtain the
identification
\begin{equation*}
  \MC(L) \cong K^0(L_+[1]) ,
\end{equation*}
where we recall from the introduction that $L_+[1]$ is the suspended
cochain complex
\begin{equation*}
  L_+[1]^i =
  \begin{cases}
    L^{i+1} , & i\ge0 , \\
    0 , & i<0 .
  \end{cases}
\end{equation*}
Tensoring $L^*$ with $\Lambda^n$, we see that the functor $K^\bull$
may be identified with the derived Maurer-Cartan locus of
\emph{abelian} differential graded Lie algebras:
\begin{equation*}
  \MC^\bull(L) \cong K^\bull(L[1]) .
\end{equation*}
This provides some motivation for our definition of the derived
Maurer-Cartan locus for not necessarily abelian differential graded
Lie algebras.

The graded vector space $\Lambda^n$ decomposes as the direct sum of
the ideal $e_0\Lambda^n$ and the image of the coface map
$d^0\colon \Lambda^{n-1}\to\Lambda^n$. The monomials in the elements
$\eps_i=e_{i+1}-e_i$, $0\le i<n$, form a basis over $\Z$ of the
free abelian group $\Lambda^n$.
\begin{lemma}
  \label{natural}
  There is a natural isomorphism
  \begin{equation*}
    \MC^n(L) \cong \bigoplus_{k=0}^n \bigoplus_{0\le i_1<\dots<i_k<n}
    \eps_{i_1} \dots \eps_{i_k} \, L^{k+1} ,
  \end{equation*}
  induced by the projection
  $\Lambda^n\to\Lambda^n/e_0\Lambda^n\cong\im(d^0)$. The element of
  $\MC^n(L)$ corresponding to
  \begin{equation*}
    \xi = \sum_{k=0}^n \sum_{0\le i_1<\dots<i_k<n} \eps_{i_1} \dots
    \eps_{i_k} \, x_{i_1\dots i_k} , \quad  x_{i_1\dots i_k} \in L^{k+1} ,
  \end{equation*}
  equals $\xi-e_0F(\xi)$.
\end{lemma}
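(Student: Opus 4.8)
The plan is to exhibit the asserted isomorphism as a pair of mutually inverse morphisms of affine schemes, placing all of the content in the observation that multiplication by $e_0$ is a contracting homotopy for the differential of $\Lambda^n$. First I would record the linear algebra. Since $\Lambda^n=e_0\Lambda^n\oplus\im(d^0)$, the projection $\Lambda^n\to\Lambda^n/e_0\Lambda^n\cong\im(d^0)$ sends the $\eps$-monomials $\eps_{i_1}\dots\eps_{i_k}$ with $0\le i_1<\dots<i_k<n$ to the monomials in $e_1,\,e_2-e_1,\dots,e_n-e_{n-1}$, which form a basis of $\im(d^0)$; hence the span $V$ of the $\eps$-monomials is a complement to $e_0\Lambda^n$ in $\Lambda^n$, and the target of the isomorphism is precisely the degree-one part $(L\o V)^1$. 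Consequently every $\mu\in(L\o\Lambda^n)^1$ has a unique decomposition $\mu=\xi+e_0\theta$ with $\xi\in(L\o V)^1$ and $\theta$ an $e_0$-free element of $(L\o\im(d^0))^2$, and the projection to the first summand is the linear map $\mu\mapsto\xi$ whose restriction to $\MC^n(L)$ I must invert.

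The key computation concerns the $e_0$-direction. From $\delta e_0=1$ and the Leibniz rule one gets $\delta(e_0\alpha)=\alpha-e_0\delta\alpha$ on $\Lambda^n$, and, carrying the Koszul signs of the tensor product, the same identity $\delta(e_0\psi)=\psi-e_0\,\delta\psi$ holds for the total differential of $L\o\Lambda^n$: multiplication by $e_0$ is a contracting homotopy. Expanding the curvature of $\mu=\xi+e_0\theta$ as $F(\mu)=F(\xi)+\delta(e_0\theta)+[\xi,e_0\theta]+\half[e_0\theta,e_0\theta]$ and using this identity together with $e_0^2=0$ (so that $[e_0\theta,e_0\theta]=0$ and $[\xi,e_0\theta]$ lies in the ideal $L\o e_0\Lambda^n$), I find that the $e_0$-free part of $F(\mu)$ is $F(\xi)_0+\theta$, where $F(\xi)_0$ denotes the $e_0$-free part of $F(\xi)$. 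Since $e_0F(\xi)=e_0F(\xi)_0$, the condition that $F(\mu)$ have vanishing $e_0$-free part is equivalent to $\theta=-F(\xi)_0$, that is, to $\mu=\xi-e_0F(\xi)$. In particular, every Maurer-Cartan element obeys $\mu=\xi-e_0F(\xi)$ with $\xi$ the projection of $\mu$ onto $V$; this produces the candidate inverse $\xi\mapsto\xi-e_0F(\xi)$ and shows $\mu\mapsto\xi$ is injective on $\MC^n(L)$.

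It remains to show the converse, that $\mu=\xi-e_0F(\xi)$ is a genuine Maurer-Cartan element for every $\xi\in(L\o V)^1$; this is the one step beyond bookkeeping, and it is where the Bianchi identity \eqref{Bianchi} enters. Such a $\mu$ already has $F(\mu)$ with vanishing $e_0$-free part by the previous paragraph, so $F(\mu)=e_0\om$ for a unique $e_0$-free $\om$. Substituting into $\delta F(\mu)+[\mu,F(\mu)]=0$ and once more extracting the $e_0$-free part---using $\delta(e_0\om)=\om-e_0\delta\om$ and $[\mu,e_0\om]\in L\o e_0\Lambda^n$---forces $\om=0$, hence $F(\mu)=0$. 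Therefore $\mu\mapsto\xi$ and $\xi\mapsto\xi-e_0F(\xi)$ are mutually inverse: one composite is the identity by construction, while the other agrees with the identity modulo the ideal generated by the components of $F$, that is, on the scheme $\MC^n(L)$. Both maps are polynomial, one linear and one quadratic, and they only involve the $\Lambda^n$-factor and the operations $\delta$ and $[-,-]$, so they are natural in $L$; this gives the isomorphism of affine schemes $\MC^n(L)\cong(L\o V)^1$ in the stated form. I expect the main obstacle to be the sign bookkeeping in the homotopy identity and in the two extractions of $e_0$-free parts; once those are settled, the geometric content is carried entirely by the contracting homotopy and the Bianchi identity.
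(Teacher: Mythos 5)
Your proof is correct and takes essentially the same route as the paper: decompose a degree-one element of $L\o\Lambda^n$ as $\xi+e_0\theta$, expand the curvature using $\delta e_0=1$ and $e_0^2=0$ to see that the Maurer-Cartan equation forces $\theta=-F(\xi)$, and invoke the Bianchi identity to show that the remaining component (the multiple of $e_0$) of the equation holds automatically. The only cosmetic differences are your choice of $\im(d^0)$ rather than the span of the $\eps$-monomials as the complement of $e_0\Lambda^n$ (the paper's choice makes $F(\xi)$ land in the complement on the nose, since that span is a $\delta$-closed subalgebra), and that you apply Bianchi to $\mu$ itself to kill $\om$ in $F(\mu)=e_0\om$, whereas the paper applies it to $\xi$ along the locus $\eta=-F(\xi)$.
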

\begin{proof}
  An element of $L^*\o\Lambda^n$ of total degree $1$ has the form
  $\xi + e_0 \eta$, where
  \begin{equation*}
    \eta = \sum_{k=0}^n \sum_{0\le i_1<\dots<i_k<n} \eps_{i_1}\dots
    \eps_{i_k} \, y_{i_1\dots i_k} , \quad  y_{i_1\dots i_k} \in L^{k+2} .
  \end{equation*}
  Taking the curvature of this element, we obtain the expression
  \begin{equation*}
    F(\xi+e_0\eta) = \bigl( F(\xi) + \eta \bigr) - e_0 \bigl(
    \delta\eta + [\xi,\eta] \bigr) .
  \end{equation*}
  Along the vanishing locus of the equation $F(\xi)+\eta=0$, the
  equation $\delta\eta+[\xi,\eta]=0$ holds automatically: it is just
  the Bianchi identity \eqref{Bianchi} for the curvature.
  \qed
\end{proof}

In terms of this representation for $\MC^\bull(L)$, the codegeneracy
morphism $s^j\colon \MC^{n+1}(L)\to\MC^n(L)$ is given by the formula
\begin{equation}
  \label{grouplike}
  (s^jx){}_{i_1\dots i_k} =
  \begin{cases}
    x_{i_1\dots i_\ell i_{\ell+1}+1\dots i_k+1} + x_{i_1\dots
      i_{\ell-1}i_\ell+1\dots i_k+1} , & i_\ell+1=j , \\
    x_{i_1\dots i_\ell i_{\ell+1}+1\dots i_k+1} , & i_\ell+1<j\le i_{\ell+1} .
  \end{cases}
\end{equation}
When $j>0$, the coface morphism $d^j\colon \MC^{n-1}(L)\to\MC^n(L)$ is
given by the formula
\begin{equation}
  \label{face}
  (d^jx){}_{i_1\dots i_k} =
  \begin{cases}
    x_{i_1\dots i_\ell i_{\ell+1}-1\dots i_k-1} , & i_\ell+1<j\le
    i_{\ell+1} , \\
    0 , & j\in\{i_1+1,\dots,i_k+1\} .
  \end{cases}
\end{equation}
The remaining coface map $d^0$ encodes the geometry of the derived
Maurer-Cartan locus: it is given by the formula
\begin{multline}
  \label{d0}
  (d^0x){}_{i_1i_2\dots i_k} \\
  = \sum_{\substack{ \tau_1,\dots,\tau_k\in\{0,1\} \\ 
      i_j-\tau_j<i_{j+1}-\tau_{j+1} }} (-1)^{\tau_1+\dots+\tau_k+k}
  \left( x_{i_1-\tau_1\dots i_k-\tau_k} + \delta_{0i_1}
    F(\xi)_{i_2-\tau_2\dots i_k-\tau_k} \right) .
\end{multline}

In particular, the codegeneracy $s^0\colon\MC^1(L)\to\MC^0(L)$ is
given by the formula $s^0(x,y)=x$, and the face maps
\begin{equation*}
  d^0, d^1 \colon MC^0(L)\cong L^1 \too \MC^1(L)\cong L^1\times L^2
\end{equation*}
are given by the formulas $d^0x=(x,-F(x))$ and $d^1x=(x,0)$. Thus,
there is a natural identification of the classical locus
$\pi^0(\MC^\bull(L))$ of the cosimplicial scheme $\MC^\bull(L)$ with
the Maurer-Cartan locus $\MC(L)$ of the differential graded Lie
algebra $L^*$.

By \eqref{grouplike} and \eqref{face}, the codegeneracy maps $s^i$ of
$\MC^\bull(L)$ as well as the coface maps $d^i$, $i>0$, are
homomorphisms of (abelian) group schemes. Adapting the terminology of
Bousfield and Kan (\cite{BK}, Chapter~X, Section~4.8), we call such a
cosimplicial scheme \textbf{grouplike}. Grouplike cosimplicial spaces
are fibrant (op.\ cit.\ Section~4.6): we now show that an analogous
property holds for grouplike simplicial schemes.

Let $X^\bull$ be a cosimplicial scheme. The \textbf{matching scheme}
$M^n(X)$ is the equalizer
\begin{equation*}
  M^n(X) = \eq \Bigl(
  \begin{xy}
    \morphism|a|/{@{>}@<3pt>}/<1000,0>[\prod_{0\le i<n} X^{n-1}`%
    \prod_{0\le i<j<n} X^{n-2};]
    \morphism|b|/{@{>}@<-3pt>}/<1000,0>[\prod_{0\le i<n} X^{n-1}`%
    \prod_{0\le i<j<n} X^{n-2};]
  \end{xy} \Bigr)
\end{equation*}
where the two maps in this diagram take $(x^i)_{0\le i<n}$ to
$\bigl(s^ix^j\bigr)_{0\le i<j<n}$ and
$\bigl(s^{j-1}x^i\bigr)_{0\le i<j<n}$.

A cosimplicial scheme $X^\bull$ is \textbf{fibrant} if for each
$n\ge0$, the morphism $X^n\to M^n(X)$ given by the formula $x^i=s^ix$
is smooth. The proof of the following proposition is modeled on
Moore's proof that simplicial groups are fibrant.
\begin{proposition}[cf.\ \cite{BK}, Proposition 4.9]
  A grouplike cosimplicial scheme is fibrant.
\end{proposition}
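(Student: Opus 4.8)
The plan is to exploit the abelian group structure in the spirit of Moore's proof that simplicial groups are Kan. Write the group operation on each $X^n$ additively. Since the codegeneracies $s^i$ are homomorphisms of abelian group schemes, the product $\prod_{0\le i<n}X^{n-1}$ is an abelian group scheme, the defining equations $s^ix^j=s^{j-1}x^i$ of the matching scheme are homomorphic in the variables $x^i$, so $M^n(X)$ is a closed subgroup scheme, and the canonical morphism
\begin{equation*}
  \mu^n \colon X^n \to M^n(X) , \quad \mu^n(x) = (s^ix)_{0\le i<n} ,
\end{equation*}
is a homomorphism of abelian group schemes. The strategy is to produce an explicit morphic section $\sigma\colon M^n(X)\to X^n$ of $\mu^n$; together with the group structure this will split $\mu^n$ and reduce its smoothness to the smoothness of its kernel.

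To construct the section, fix $(x^i)_{0\le i<n}\in M^n(X)$ and build a sequence $t_0,\dots,t_n\in X^n$, with $t_0=0$ and
\begin{equation*}
  t_{r+1} = t_r + d^{r+1}\bigl( x^r - s^rt_r \bigr) , \quad 0\le r<n .
\end{equation*}
The correction uses only the homomorphisms $s^i$ and $d^{r+1}$ (whose index $r+1\ge1$ is positive, so the exceptional map $d^0$ is never invoked), so each $t_r$ is obtained from the $x^i$ by applying coface maps, codegeneracy maps, and the group operations; hence $t_n$ depends morphically on $(x^i)$ and we may set $\sigma(x^i)=t_n$. I would then prove by induction that $s^it_r=x^i$ for all $i<r$. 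The inductive step rests on two facts: the cosimplicial identity $s^rd^{r+1}=\mathrm{id}$, which gives $s^rt_{r+1}=x^r$; and the vanishing $s^i\bigl(x^r-s^rt_r\bigr)=0$ for $i<r$, which guarantees via $s^id^{r+1}=d^rs^i$ that the correction term does not disturb the already-correct lower codegeneracies. Consequently $s^it_n=x^i$ for every $i<n$, that is $\mu^n\sigma=\mathrm{id}$.

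The main obstacle is the vanishing $s^i(x^r-s^rt_r)=0$, which is where all the hypotheses conspire. Combining the matching relation $s^ix^r=s^{r-1}x^i$ (valid for $i<r$), the inductive hypothesis $x^i=s^it_r$, and the cosimplicial identity $s^is^r=s^{r-1}s^i$ gives
\begin{equation*}
  s^ix^r = s^{r-1}x^i = s^{r-1}s^it_r = s^is^rt_r ,
\end{equation*}
so the two terms cancel. This is precisely the point at which the grouplike hypothesis (that every structure map other than $d^0$ is a homomorphism) and the compatibility conditions defining $M^n(X)$ are simultaneously needed, and it is the heart of the dualized Moore argument.

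Finally, to pass from the section to smoothness, I would use that we work over a field of characteristic zero, so every group scheme in sight is smooth by Cartier's theorem and the kernel $\ker(\mu^n)$ is a smooth closed subgroup scheme. The section together with the abelian group structure yields an isomorphism
\begin{equation*}
  X^n \cong M^n(X)\times\ker(\mu^n) , \quad x\mapsto \bigl( \mu^n(x) ,\, x-\sigma\mu^n(x) \bigr) ,
\end{equation*}
under which $\mu^n$ becomes the projection onto the first factor. This projection is smooth because its fibre $\ker(\mu^n)$ is a smooth group scheme; hence $X^n\to M^n(X)$ is smooth for every $n$, and the grouplike cosimplicial scheme $X^\bull$ is fibrant.
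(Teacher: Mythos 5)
Your proof is correct and takes essentially the same approach as the paper: a dualized Moore induction that builds a scheme-theoretic section of $X^n\to M^n(X)$ step by step (the paper's recursion, written multiplicatively, is $y^{i+1}=y^i\,d^i\bigl((s^iy^i)^{-1}x^i\bigr)$ with $y^0=1$), followed by the characteristic-zero fact that a homomorphism of group schemes admitting a section is smooth. Your departures are cosmetic: additive notation, the coface index $d^{r+1}$ in the correction term (which is indeed the index that makes the induction $s^it_r=x^i$ close under the standard cosimplicial identities), and making the paper's one-line smoothness claim explicit via the splitting $X^n\cong M^n(X)\times\ker(\mu^n)$.
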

\begin{proof}
  In characteristic zero, a morphism of group schemes is smooth if it
  has a section. We define morphisms $y^i\colon M^n(X)\to X^n$,
  $0\le i\le n+1$, by induction on $i$: $y^0=1$ and
  \begin{equation*}
    y^{i+1} = y^i d^i\bigl( (s^iy^i)^{-1} x^i \bigr) .
  \end{equation*}
  It is easily proved, by induction on $i$, that $s^jy^i=x^j$ for
  $j<i$. The desired section is $y^{n+1}\colon M^n(X)\to X^n$.
  \qed
\end{proof}

The graded commutative algebra $N_*(\CO(\MC^\bull(L)))^\sharp$ is
actually a free commutative algebra: there is a graded vector space
$V_*$ and an isomorphism of graded commutative algebras
\begin{equation*}
  N_*(\CO(\MC^\bull(L)))^\sharp \cong \Sym V .
\end{equation*}
This is a because the multiplicative structure of
$N_*(\CO(\MC^\bull(L)))$ does not depend on the coface maps of
$\MC^\bull(L)$, but only on its codegeneracies, and thus there is an
isomorphism
\begin{equation}
  \label{sharp}
  N_*(\CO(\MC^\bull(L)))^\sharp \cong
  N_*(\CO(\MC^\bull(L^\natural)))^\sharp ,
\end{equation}
where $L^\natural$ is the underlying cochain complex of the
differential graded Lie algebra $L^*$. We now apply the following result.
\begin{proposition}
  Let $W_*$ be a connective chain complex. Then there is a connective
  graded vector space $V_*$ and an isomorphism of graded commutative
  algebras
  \begin{equation*}
    N_*(\Sym K(W))^\sharp \cong \Sym V .
  \end{equation*}
\end{proposition}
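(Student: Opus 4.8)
The plan is to exhibit $N_*(\Sym K(W))$ as a connected commutative Hopf algebra and then invoke Leray's theorem. First I would use that the symmetric algebra is naturally a commutative Hopf algebra: for any vector space $U$, the algebra $\Sym U$ carries a unique comultiplication for which the elements of $U$ are primitive, and this structure is functorial in $U$ and cocommutative. Since $K(W)$ is a simplicial vector space whose face and degeneracy maps are linear, applying $\Sym$ in each simplicial degree turns $\Sym K(W)$ into a simplicial commutative bialgebra. Proposition~\ref{SW} then applies directly, so that the normalized chain complex $A=N_*(\Sym K(W))$ is a differential graded commutative bialgebra; once connectivity is established it will automatically be a Hopf algebra.

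The next step is to introduce the auxiliary grading that supplies connectivity. The symmetric algebra decomposes as $\Sym U=\bigoplus_{w\ge0}\Sym^w U$ by symmetric powers, and this decomposition is preserved by every linear map, by the multiplication (which adds weights), and by the comultiplication (which preserves total weight). These compatibilities survive normalization, since the product on $A$ is assembled from the shuffle map and the componentwise product on $\Sym$, both of which respect weight. Thus $A$ becomes bigraded, by homological degree and by weight $w$. The homological grading carries the Koszul signs making $A$ graded-commutative, while with respect to the weight grading $A$ is connected: the weight-zero part of $\Sym K(W)$ is the constant simplicial algebra $K$, whose normalization is $K$ concentrated in homological degree zero.

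With this in hand I would apply Leray's theorem in the form of Milnor and Moore \cite{MM}*{Theorem~7.5}: a connected graded-commutative Hopf algebra over a field of characteristic zero is free as a graded-commutative algebra, isomorphic to $\Sym$ of its module of indecomposables. Here the connectivity grading is the weight $w$, while the ambient symmetric monoidal category is that of homologically graded vector spaces with Koszul signs. Choosing a splitting of the quotient $A\to Q(A)$ onto the indecomposables produces an isomorphism $A^\sharp\cong\Sym V$ of graded commutative algebras, with $V=Q(A)$. Finally, since $N_*$ of any simplicial object is connective, $A^\sharp$ vanishes in negative homological degrees, and hence so does $V$; this yields the asserted connective graded vector space.

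The step I expect to be the main obstacle is the careful application of Leray's theorem in this bigraded setting. Milnor and Moore state their result for a single connective grading, whereas here connectivity is provided by the weight $w$ while the graded-commutativity (and the signs entering the shuffle product) refer to the homological grading. I would therefore need to check that their inductive construction of the splitting $Q(A)\to A$ — which proceeds by increasing weight and uses only commutativity together with the characteristic-zero hypothesis — carries over verbatim to Hopf algebra objects in the category of homologically graded vector spaces. The remaining verifications, namely that $\Sym K(W)$ is a simplicial bialgebra and that the weight grading is respected by the Eilenberg--Zilber maps, are functorial and routine.
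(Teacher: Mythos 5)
Your argument is correct, and it rests on the same two pillars as the paper's proof---Proposition~\ref{SW}, which makes $N_*(\Sym K(W))$ a graded commutative bialgebra, and Leray's theorem in the form of Milnor--Moore \cite{MM}*{Theorem~7.5}---but it arranges the connectivity hypothesis of the latter differently, and this is a genuine difference. The paper truncates: it sets $W^+_i=W_i$ for $i>0$ and $W^+_0=0$, establishes a natural isomorphism of graded bialgebras $N_*(\Sym K(W))^\sharp \cong \Sym W_0 \otimes N_*(\Sym K(W^+))^\sharp$, and applies Milnor--Moore to the second factor, which is connected in the \emph{homological} grading; freeness of the whole then follows from freeness of each tensor factor. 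You instead keep $W$ whole and extract connectivity from the symmetric-power (weight) grading, which, as you say, is preserved by the levelwise structure maps and by the shuffle and Alexander--Whitney maps. The obstacle you flag---that Milnor--Moore state their theorem for a single grading carrying both the Koszul signs and the connectivity---is real, but it can be discharged more cheaply than by re-running their induction: regrade $A=N_*(\Sym K(W))$ by the total degree $n=d+2w$, where $d$ is the homological degree and $w$ the weight. Since $n\equiv d \pmod 2$, the sign conventions for the two gradings coincide, so $A$ with the total grading is a graded commutative bialgebra in the classical one-grading sense, and it is connected: its degree-zero part is the weight-zero part in homological degree zero, namely the ground field, because the weight-zero part of $\Sym K(W)$ is the constant simplicial algebra on the ground field. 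Hence Theorem~7.5 applies verbatim, and the parity agreement guarantees that the free commutative algebra it produces (polynomial on even, exterior on odd generators) is the same as the one formed with respect to homological degree alone, giving $A^\sharp\cong\Sym V$ with $V=Q(A)$ connective. In exchange for this regrading argument, your route avoids the paper's splitting-off of $\Sym W_0$ (together with the check that this splitting is one of graded algebras) and treats all of $W$ uniformly; the paper's route, conversely, invokes Milnor--Moore exactly as stated, with no bigraded bookkeeping.
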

\begin{proof}
  The proof makes use of the fact that $\Sym K(W)$ is a simplicial
  commutative bialgebra. Proposition~\ref{SW} implies that
  $N_*(\Sym K(W))^\sharp$ is a graded commutative bialgebra.

  Let $W_*^+$ be the chain complex
  \begin{equation*}
    W^+_i = \begin{cases}
      W_i , & i>0 , \\
      0 , & i=0 .
    \end{cases}
  \end{equation*}
  There is a natural isomorphism of graded commutative bialgebras
  \begin{equation*}
    N_*(\Sym K(W))^\sharp \cong \Sym W_0 \o N_*(\Sym K(W^+))^\sharp .
  \end{equation*}
  Let $\Nbar_*(\Sym K(W^+))$ be the augmentation ideal of
  $N_*(\Sym K(W^+))$, that is, the chain complex of elements of
  positive degree, and let
  \begin{equation*}
    Q(N_*(\Sym K(W^+))) \cong \Nbar_*(\Sym K(W^+)) \big/ \bigl(
    \Nbar_*(\Sym K(W^+))\cdot\Nbar_*(\Sym K(W^+) \bigr)
  \end{equation*}
  be the chain complex of indecomposables.

  Theorem~7.5 of Milnor and Moore~\cite{MM}, which holds over any
  field of characteristic zero, states that $N_*(\Sym K(W^+))^\sharp$
  is a free graded commutative algebra, generated by any section of
  the quotient morphism
  \begin{equation*}
    \Nbar_*(\Sym K(W^+))^\sharp \too Q(N_*(\Sym K(W^+)))^\sharp .
    \qed
  \end{equation*}
\end{proof}

\section{Proof of Theorem~\ref{main}}

\label{proof}

This section is the heart of this paper: we prove that the
differential graded Maurer-Cartan locus $\mc(L)$ is equivalent to the
derived Maurer-Cartan locus $\MC^\bull(L)$. Since both functors only
depend on $L^*_+$, we will assume in this section that $L^*=L^*_+$,
in other words, that $L^i$ vanishes unless $i\ge1$.

The normalization $N_*(\CO(\MC^\bull(L)))$ of the simplicial
commutative algebra $\CO(\MC^\bull(L))$ is the differential graded
commutative algebra of functions on an affine differential graded
scheme. If $n>0$, there is a natural linear map
$\alpha\mapsto\Phi(\alpha)$ from the vector space $(L^{n+1}){}^\vee$
to $\CO(\MC^n(L))$, which takes $\alpha\in(L^{n+1}){}^\vee$ to the
linear form
$\alpha(x_{0\dots n-1})$ on $\MC^n(L)$. (Here, we use the coordinate
system of Lemma~\ref{natural}.) The explicit formula \eqref{face} for
the coface maps $d^i\colon\MC^{n-1}(L))\to\MC^n(L)$, $1\le i\le n$,
shows that the function $\Phi(\alpha)$ lies in
\begin{equation*}
  \bigcap_{i=1}^n \, \ker\bigl( \p_i\colon \CO(\MC^n(L)) \to
  \CO(\MC^{n-1}(L)) \bigr) ,
\end{equation*}
and thus determines an element of $N_n(\CO(\MC^\bull(L)))$. The
resulting linear map from $\bigoplus_{n=0}^\infty(L^{n+1}){}^\vee$ to
$\bigoplus_{n=0}^\infty N_{-n}(\CO(\MC^\bull(L)))$ induces a morphism
of graded commutative algebras
\begin{equation}
  \label{Phi}
  \Phi \colon \CO(\mc(L)) \too N_{-*}(\CO(\MC^\bull(L))) .
\end{equation}

\begin{lemma}
  \label{compatible}
  The morphism $\Phi$ is compatible with the differentials on the
  differential graded algebras $\CO(\mc(L))$ and
  $N_{-*}(\CO(\MC^\bull(L)))$.
\end{lemma}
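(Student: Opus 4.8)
The plan is to verify that $\Phi$ intertwines the two differentials by checking the assertion on the linear generators $\Phi(\alpha)$, $\alpha\in(L^{n+1})^\vee$, and then extending to all of $\CO(\mc(L))$ by multiplicativity. Since $\Phi$ is a homomorphism of graded commutative algebras, and both differentials are derivations, it suffices to show $\Phi(d\alpha)=\p\,\Phi(\alpha)$ on generators. Here $\p$ is the normalized differential on $N_*(\CO(\MC^\bull(L)))$, which by Lemma~\ref{dold} is computed by $\p_0=d^0{}^\vee$ once we realize $N_n$ as the intersection of kernels $\bigcap_{i=1}^n\ker\p_i$. Thus the heart of the calculation is to compare the Chevalley-Eilenberg differential $d\alpha$, which is the sum of the adjoint of $\delta$ and the adjoint of the bracket, against the pullback of $\Phi(\alpha)$ along $d^0$.

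The key computational input is the explicit formula \eqref{d0} for the coface map $d^0\colon\MC^{n-1}(L)\to\MC^n(L)$. I would evaluate $\Phi(\alpha)\circ d^0$ by substituting the coordinate expression $(d^0x)_{0\dots n-1}$ from \eqref{d0} into the linear form $\alpha(x_{0\dots n-1})$. In the formula \eqref{d0}, the terms $x_{i_1-\tau_1\dots i_k-\tau_k}$ produce contributions that, after accounting for the signs $(-1)^{\tau_1+\dots+\tau_k+k}$ and the constraints on the indices, assemble precisely into the adjoint of $\delta$ applied to $\alpha$; meanwhile the curvature terms $\delta_{0i_1}F(\xi)_{i_2-\tau_2\dots i_k-\tau_k}$, which carry the nonlinear geometry, contribute the quadratic bracket part of the Chevalley-Eilenberg differential. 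The point is that $F(\xi)=\delta\xi+\tfrac12[\xi,\xi]$, so expanding the curvature coordinate $F(\xi)_{i_2\dots i_k}$ in terms of the variables $x_{j_1\dots j_\ell}$ yields both a term linear in the $x$'s (matching the remaining part of the $\delta$-adjoint) and a term bilinear in the $x$'s (giving the bracket-adjoint).

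The main obstacle I anticipate is bookkeeping: correctly matching the combinatorial signs coming from the $\tau_j$-summation and the factor $(-1)^{\tau_1+\dots+\tau_k+k}$ in \eqref{d0} against the Koszul signs built into the Chevalley-Eilenberg differential, and verifying that the index constraints $i_j-\tau_j<i_{j+1}-\tau_{j+1}$ correctly reproduce the way the bracket map $(L^k)^\vee\to\bigoplus_{i=1}^{k-1}(L^i)^\vee\o(L^{k-i})^\vee$ splits a cochain. In particular, one must check that the symmetrization implicit in the curvature's bracket term matches the graded-symmetry of the Chevalley-Eilenberg cochains. Because $\Phi$ is forced to respect products and because the generators $\Phi(\alpha)$ live in the normalized subcomplex $\bigcap_{i=1}^n\ker\p_i$, once the generator identity $\Phi(d\alpha)=\p_0\Phi(\alpha)$ is established, compatibility on all of $\CO(\mc(L))$ follows automatically from the Leibniz rule, so no separate argument on higher-degree elements is needed.

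Finally, I would remark that the constancy of this comparison—that only $d^0$ enters, the other cofaces $d^i$, $i\ge1$, having already been used via Lemma~\ref{dold} to identify the normalized complex—is exactly what makes the verification tractable, reducing an a priori infinite family of compatibility conditions to the single formula \eqref{d0}.
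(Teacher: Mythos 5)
Your outline follows the paper's own proof quite closely: reduce to the generators $\Phi(\alpha)$ using that $\Phi$ is an algebra map and both differentials are derivations, identify the normalized differential with pullback along $d^0$ via Lemma~\ref{dold}, then match the linear part of \eqref{d0} with the adjoint of $\delta$ and the curvature part with the adjoint of the bracket. The paper does exactly this, writing the Chevalley-Eilenberg differential as $d_1+d_2$ with $(d_1\alpha)(x)=\alpha(\delta x)$ and $(d_2\alpha)(x,y)=(-1)^{|x|}\alpha([x,y])$.

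There is, however, one ingredient your ``bookkeeping'' cannot be done without and which your proposal never identifies: an explicit formula for products of generators in $N_{-*}(\CO(\MC^\bull(L)))$. The bracket part $d_2\alpha$ is a sum of products of pairs of generators, so $\Phi(d_2\alpha)$ is a sum of products $\Phi(\beta)\Phi(\gamma)$ computed in the normalized complex; this multiplication is not a pointwise product of functions on any single $\MC^n(L)$, but the shuffle product $N_*(m)g$ of Section~3, and evaluating it requires the degeneracies of the simplicial algebra $\CO(\MC^\bull(L))$, that is, the codegeneracy formula \eqref{grouplike}. The paper's proof devotes a displayed computation to precisely this point, establishing
\begin{equation*}
  \bigl( \Phi(\beta)\Phi(\gamma) \bigr)(x) =
  \sum_{I\coprod J=\{0,\dots,p+q-1\}}
  (-1)^{\sum_{\ell=1}^{p} (i_\ell-\ell+1)} \,
  \beta(x_{i_1\dots i_p}) \, \gamma(x_{j_1\dots j_q}) ,
\end{equation*}
and it is only by comparing this shuffle sum with the curvature terms of \eqref{d0} that the quadratic part of $d\Phi(\alpha)$ can be identified with $\Phi(d_2\alpha)$; note that the shuffle signs here are exactly the signs appearing in \eqref{d0}. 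Your proposal invokes \eqref{d0} but never \eqref{grouplike}, so the comparison of bilinear terms---including the symmetrization issue you yourself flag---cannot be closed as written. In short: right strategy, same as the paper's, but with one concrete missing step rather than mere sign-chasing.
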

\begin{proof}
  The differential $d\colon \CO^{n-1}(\mc(L))\to\CO^n(\mc(L))$ is the
  sum of differentials $d_1$ and $d_2$, given by the formulas
  $(d_1\alpha)(x)=\alpha(\delta x)$ and
  $(d_2\alpha)(x,y)=(-1)^{|x|} \, \alpha([x,y])$, where $x,y\in L^*$.

  Using the explicit formula for the codegeneracy map
  \eqref{grouplike}, we may show that the product of the linear forms
  $\Phi(\beta)$ and $\Phi(\gamma)$ associated to the one-cochains
  $\beta\in(L^{p+1}){}^\vee$ and $\gamma\in(L^{q+1}){}^\vee$ on $L^*$
  is represented by the following quadratic polynomial on
  $\MC^{p+q}(L)$:
  \begin{equation*}
    \bigl( \Phi(\beta)\Phi(\gamma) \bigr)(x) =
    \sum_{\substack{ I=\{i_1<\dots<i_p\} \\
        J=\{j_1<\dots<j_q\} \\ I\coprod J=\{0,\dots,p+q-1\} }}
    (-1)^{\sum_{\ell=1}^p (i_\ell-\ell+1) } \beta(x_{i_1\dots i_p})
    \gamma(x_{j_1\dots j_q}) .
  \end{equation*}
  The differential
  \begin{equation*}
    d \colon N_{-n}(\CO(\MC^\bull(L))) \too
    N_{-n+1}(\CO(\MC^\bull(L)))
  \end{equation*}
  equals the pullback by the morphism
  $d^0\colon\MC^{n-1}(L)\to\MC^n(L)$. Applied to $\Phi(\alpha)$, where
  $\alpha\in\bigl(L^{n+1}\bigr){}^\vee$, \eqref{d0} gives
  \begin{equation*}
    d\Phi(\alpha)(x) = - \alpha\bigl(\delta x_{0\dots
      n-2}\bigr) - \tfrac{1}{2} \sum_{I\coprod J=\{0,\dots,n-2\}}
    (-1)^{\sum_{\ell=1}^{|I|} (i_\ell-\ell+1)} \, \alpha\bigl(
    [x_I,x_J] \bigr) .
  \end{equation*}
  The first and second terms inside the parentheses correspond to
  $d_1$ and $d_2$ respectively: in the case of $d_2$, we use the
  explicit formula for $\Phi(\beta)\Phi(\gamma)$ to make this
  identification.
  \qed
\end{proof}

Let $F^1\CO(\mc(L))$ be the augmentation ideal of the algebra of
Chevalley-Eilenberg cochains of $L^*$ (that is, cochains of negative
degree), and let $F^1\CO(\MC^\bull(L))$ be the augmentation ideal of
the simplicial commutative algebra $\CO(\MC^\bull(L))$ (that is,
polynomials with vanishing constant term). For $k>1$, let
$F^k\CO(\mc(L))$ and $F^k\CO(\MC^\bull(L))$ be the $k$th powers of
$F^1\CO(\mc(L))$, and $F^1\CO(\MC^\bull(L))$, and let
$F^kN_*(\CO(\MC^\bull(L))=N_*(F^k\CO(\MC^\bull(L)))$. The morphism
$\Phi$ of \eqref{Phi} is compatible with the filtrations on
$\CO(\mc(L))$ and $N_*(\CO(\MC^\bull(L))$, and the induced morphism
\begin{equation*}
  \gr_F\Phi \colon \gr_F\CO(\mc(L)) \too
  \gr_FN_{-*}(\CO(\MC^\bull(L)))
\end{equation*}
may be identified with the morphism
\begin{equation}
  \label{phinatural}
  \Phi \colon \CO(\mc(L^\natural)) \too
  N_{-*}(\CO(\MC^\bull(L^\natural)))
\end{equation}
of differential graded commutative algebras.  Theorem~\ref{main} is
thus a consequence of the following lemma.
\begin{lemma}
  The morphism \eqref{phinatural} is a quasi-isomorphism.
\end{lemma}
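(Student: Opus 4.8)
The plan is to exploit the fact that, because $L^\natural$ is abelian, the whole problem becomes linear, and then to reduce the statement to the Eilenberg--Zilber theorem. First I would recall from Section~4 the identification $\MC^\bull(L^\natural)\cong K^\bull(L_+[1])$ of the derived Maurer--Cartan locus of the \emph{abelian} differential graded Lie algebra $L^\natural$ with the cosimplicial vector space obtained by applying the Dold--Kan functor $K^\bull$ to the coconnective complex $L_+[1]$. Taking functions (a contravariant operation) turns this cosimplicial vector space into a simplicial commutative algebra $\CO(\MC^\bull(L^\natural))\cong\Sym(M_\bull)$, where $M_\bull$ is the simplicial vector space given levelwise by the dual, $M_n=(K^n(L_+[1]))^\vee$; here finiteness of type guarantees that the levels are finite-dimensional, so duality is well behaved. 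Since normalization is computed by the same formula as conormalization in the opposite category, and $N^*K^\bull$ is the identity on coconnective complexes, $N_*(M)\cong(L_+[1])^\vee$. On the other side $\CO(\mc(L^\natural))=\Sym((L_+[1])^\vee)=\Sym(N_*M)$ is the free graded commutative differential graded algebra on the complex $N_*M$, since the Chevalley--Eilenberg differential reduces to the adjoint of $\delta$ once the bracket of $L^\natural$ vanishes. Under these identifications $\Phi$ is the canonical comparison map $\Sym(N_*M)\to N_*(\Sym M)$.

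Next I would introduce the grading by \textbf{polynomial degree} $k$, the number of symmetric factors. In the abelian case the curvature $F(\xi)=\delta\xi$ is linear, so the coface map $d^0$ of \eqref{d0} is linear and its pullback preserves polynomial degree; likewise the differential on $\Sym(N_*M)$ is a derivation preserving polynomial degree. Hence $\Phi$ respects the splittings $\Sym(N_*M)=\bigoplus_k\Sym^k(N_*M)$ and $N_*(\Sym M)=\bigoplus_k N_*(\Sym^k M)$, and it suffices to prove that each component
\begin{equation*}
  \Sym^k(N_*M)\too N_*(\Sym^k M)
\end{equation*}
is a quasi-isomorphism.

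The heart of the argument is the $k$-fold Eilenberg--Zilber theorem. Writing $\Sym^k M=(M^{\o k})_{S_k}$ levelwise and $\Sym^k(N_*M)=(N_*(M)^{\o k})_{S_k}$, I would iterate the shuffle map $g$ of \eqref{shuffle} to produce a quasi-isomorphism $g^{(k)}\colon N_*(M)^{\o k}\to N_*(M^{\o k})$. The key point --- and the step I expect to be the main obstacle --- is the identification of the degree-$k$ component of $\Phi$ with this data: because the multiplication on $N_*(\Sym M)$ is defined through the shuffle map, the product of $k$ linear forms is $g^{(k)}$ followed by the projection $N_*(M^{\o k})\to N_*(\Sym^k M)$ to coinvariants; and $g^{(k)}$ is \emph{$S_k$-equivariant}, since the shuffle map makes $N_*$ into a lax symmetric monoidal functor. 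This symmetry (as opposed to mere associativity) of the Eilenberg--Zilber data is precisely the content packaged by Proposition~\ref{SW}, and verifying that $\Phi$ in degree $k$ is exactly the map induced by $g^{(k)}$ on $S_k$-coinvariants is where the explicit formulas for the codegeneracies and for the product $\Phi(\beta)\Phi(\gamma)$ from Lemma~\ref{compatible} must be matched against \eqref{shuffle}.

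Finally, since we work over a field of characteristic zero, the functor of $S_k$-coinvariants is exact --- it is a direct summand, split by the averaging idempotent $\tfrac{1}{k!}\sum_{\sigma\in S_k}\sigma$ acting with Koszul signs --- and it commutes with the additive functor $N_*$, giving $N_*(\Sym^k M)\cong(N_*(M^{\o k}))_{S_k}$. Applying coinvariants to the quasi-isomorphism $g^{(k)}$ therefore yields a quasi-isomorphism $\Sym^k(N_*M)\to N_*(\Sym^k M)$. Summing over $k$ shows that $\Phi$ is a quasi-isomorphism, which, together with Lemma~\ref{compatible}, completes the proof.
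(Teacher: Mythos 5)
Your proposal is correct and is essentially the paper's own proof: both reduce $\Phi$, via the Dold--Kan identification of $\MC^\bull(L^\natural)$ with the image under $K_\bull$ of the dual complex $Z_*=L^\natural[1]^\vee$, to the symmetrized iterated shuffle map, and conclude from the Eilenberg--Zilber theorem together with the exactness of $S_k$-(co)invariants in characteristic zero. One minor correction: the $S_k$-equivariance of the iterated shuffle map is the classical symmetry due to Eilenberg and Mac\,Lane (it is what makes $N_*$ of a simplicial commutative algebra a differential graded commutative algebra), not the content of Proposition~\ref{SW}, which concerns the compatibility of the shuffle map with the Alexander--Whitney map.
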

\begin{proof}
  Let $Z_*$ be the connective chain complex $L^\natural[1]{}^\vee$: as
  a vector space, we have $Z_n\cong\bigl(L^{n+1}\bigr){}^\vee$. The
  iterated shuffle product $g^{\o n}$ induces a morphism of simplicial
  abelian groups
  \begin{equation*}
    K_\bull g^{(n)} \colon K_\bull(Z^{\o n}) \too K_\bull(Z)^{\o n} ,
  \end{equation*}
  which is a quasi-isomorphism by the Eilenberg-Zilber theorem. This
  morphism is equivariant with respect to the action of the symmetric
  group $S_n$. Taking invariants, summing over $n$, and taking
  normalized chains, we obtain a quasi-isomorphism of differential
  graded commutative algebras
  \begin{equation*}
    \Sym Z \too N_*(\Sym K_\bull(Z)) .
  \end{equation*}
  This may be identified with the morphism $\Phi$ of
  \eqref{phinatural}.
  \qed
\end{proof}

\section{Generalization to nilpotent \Linf-algebras}

The definition of the differential graded scheme $\mc(L)$ extends to
\Linf-algebras: these are a generalization of differential graded Lie
algebras in which the Jacobi rule is only satisfied up to a hierarchy
of higher homotopies.

An operation $[x_1,\dots,x_k]$ on a graded vector space $L^*$ is
\textbf{graded antisymmetric} if
\begin{equation*}
  [x_1,\dotsc,x_i,x_{i+1},\dots,x_k] + (-1)^{|x_i||x_{i+1}|} \,
  [x_1,\dotsc,x_{i+1},x_i,\dots,x_k] = 0
\end{equation*}
for all $1\le i\le k-1$.

An \textbf{\Linf-algebra} is a graded vector space $L^*$ with graded
antisymmetric operations a sequence $[x_1,\dotsc,x_n]$ of degree
$2-n$, $n>0$, such that for each $n$,
\begin{equation*}
  \sum_{k=1}^n
  \sum_{\substack{ I=\{i_1<\dots<i_k\} \\ J=\{j_1<\dots<j_{n-k}\} \\
      I\cup J=\{1,\dots,n\} }} (-1)^{\eps+\sum_{i=1}^{n-k} (j_i-i)} \,
  [[x_{i_1},\dots,x_{i_k}],x_{j_1},\dotsc,x_{j_{n-k}}] = 0 .
\end{equation*}
Here, the sign $(-1)^\eps$ is the sign associated by the Koszul sign
convention to the action of $\pi$ on the elements $x_1,\dots,x_n$ of
$L^*$. The $1$-bracket $x\mapsto[x]$ is a differential on $L^*$, so an
\Linf-algebra is in particular a cochain complex.

An \Linf-algebra $L^*$ is \textbf{nilpotent} if it has a decreasing
filtration $F_kL^*$ such that for each $i\in\Z$, $F_kL^i=0$ if
$k\gg0$, and for each $n>0$,
\begin{equation*}
  [F_{k_1}L^{i_1},\dots,F_{k_n}L^{i_n}] \subset
  F_{k_1+\dots+k_n+1}L^{i_1+\dots+i_n-n+2} .
\end{equation*}
In particular, every differential graded Lie algebra concentrated in
degrees $\ge1$ is nilpotent.
\begin{lemma}
  Let $L^*$ be an \Linf-algebra. Its truncation $L^*_+$ is an
  \Linf-algebra, which is nilpotent if and only if the curvature
  \begin{equation*}
    F(x) = \sum_{n=1}^\infty \, \frac{1}{n!} \, [x^{\o n}]
  \end{equation*}
  is a polynomial map from $L^1$ to $L^2$.
\end{lemma}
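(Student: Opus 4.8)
The plan is to prove the two assertions separately: first that the brutal truncation $L^*_+$ inherits an \Linf-structure, and then the nilpotency criterion.

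For the first assertion, note that $L^*_+=\bigoplus_{i\ge1}L^i$ is a graded subspace of $L^*$ closed under all the operations: the $1$-bracket raises degree, and for $n\ge2$ the bracket $[x_1,\dots,x_n]$ of homogeneous elements $x_a\in L^{d_a}$ with $d_a\ge1$ lands in degree $\sum_a d_a+2-n\ge n+2-n=2$. Thus the restrictions of the brackets are operations on $L^*_+$, and since the defining \Linf-relations hold identically on $L^*$ they continue to hold on the subspace; hence $L^*_+$ is a sub-\Linf-algebra. Throughout the nilpotency argument I will use the following bookkeeping device. If $T$ is a rooted tree whose internal vertices are brackets and whose leaves carry homogeneous inputs of degrees $d_1,\dots,d_\ell$, an induction on $T$ using the degree $2-n$ of the $n$-ary bracket shows that the value of $T$ lies in degree $\sum_j(d_j-1)+I(T)+1$, where $I(T)$ is the number of internal vertices. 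In particular a single bracket $[x_1,\dots,x_n]$ with output in $L^i$ satisfies $\sum_a(d_a-1)=i-2$, so at most $i-2$ of its arguments have degree $\ge2$ while the rest lie in $L^1$.

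For the forward implication, suppose $L^*_+$ is nilpotent, with filtration $L_+=F_1\supseteq F_2\supseteq\cdots$ satisfying $F_kL^i=0$ for $k\gg0$ and $[F_{k_1},\dots,F_{k_n}]\subseteq F_{k_1+\dots+k_n+1}$. For $x\in L^1=F_1L^1$ the bracket condition gives $[x^{\o n}]\in F_{n+1}L^2$, and since $F_kL^2=0$ for $k$ large we conclude $[x^{\o n}]=0$ once $n$ is large. Hence the curvature $F(x)=\sum_n\frac1{n!}[x^{\o n}]$ is a finite sum, i.e.\ a polynomial map from $L^1$ to $L^2$.

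For the converse, suppose $F$ is polynomial, so there is $N$ with $[\xi_1,\dots,\xi_n]=0$ for all $\xi_a\in L^1$ and $n>N$ (in characteristic zero, polarization upgrades vanishing on the diagonal to vanishing of the symmetric $n$-linear bracket on $(L^1)^{\o n}$). Say that \emph{the brackets into degree $i$ have bounded arity} if there is $A(i)$ so that every bracket $[x_1,\dots,x_n]$ with output in $L^i$ and $n>A(i)$ vanishes; by the degree count this is equivalent to bounding the number of $L^1$-arguments. Granting this for every $i$, I build a degreewise filtration: set $w(1)=1$ and
\[
  w(i)=1+\max\Bigl\{\,\textstyle\sum_a w(d_a)\ \Big|\ L^{d_1}\o\cdots\o L^{d_n}\to L^i\ \text{is not identically zero}\,\Bigr\}.
\]
Every argument of a nonzero bracket into $L^i$ has degree $d_a\le i-1$, so this is a well-founded recursion, and the maximum is finite precisely because the arities are bounded. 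The filtration $F_kL_+=\bigoplus_{w(i)\ge k}L^i$ then has $F_1L_+=L_+$ and $F_kL^i=0$ for $k>w(i)$, and by construction $w(i)\ge\sum_a w(d_a)+1$ for each nonzero bracket, which is exactly $[F_{k_1},\dots,F_{k_n}]\subseteq F_{k_1+\dots+k_n+1}$; thus $L_+$ is nilpotent.

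It remains to prove bounded arity into each degree, which is the heart of the matter, by induction on $i$. For $i=2$ every bracket into $L^2$ has all arguments in $L^1$, so polynomiality gives $A(2)=N$. For $i>2$ I would fix the at most $i-2$ higher arguments $w_1,\dots,w_q\in L^{\ge2}$ and bound the number $p$ of $L^1$-arguments in a nonzero bracket $[\xi^{\o p},w_1,\dots,w_q]$ by feeding the inputs $\xi^{\o p},w_1,\dots,w_q$ into the generalized Jacobi identity. There the terms whose inner bracket is a pure power $[\xi^{\o a}]$ vanish for $a>N$, while the surviving terms either reinsert a pure bracket $[\xi^{\o a}]\in L^2$ ($a\le N$) as a new higher argument or bracket a sub-expression $[\xi^{\o b},w_S]$ of strictly smaller higher content controlled by the inductive hypothesis, yielding a recursion in $p$ that forces $[\xi^{\o p},w_\bullet]=0$ for large $p$. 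This induction is the main obstacle: unlike the differential graded Lie case, where only binary brackets occur and the number of leaves — hence the arity — is automatically bounded by the output degree, the higher brackets of an \Linf-algebra permit arbitrarily many $L^1$-arguments in a single operation, and only the generalized Jacobi relations, seeded by the degree-$2$ polynomiality bound, prevent the mixed brackets from persisting. Keeping the Koszul signs straight in this recursion is the one genuinely delicate point; everything else is bookkeeping with the tree degree identity.
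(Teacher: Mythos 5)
Your first three steps are fine: the truncation is indeed closed under all the brackets, the forward implication is correct (granting, as you do and as one must for that direction to hold at all, that the paper's filtration is normalized so that $F_1L_+=L_+$), and your reduction of the converse to ``bounded arity into each degree'' via the weight recursion $w(i)$ is a correct and genuinely useful organizing step: bounded arity really does imply nilpotence by exactly your filtration $F_kL_+=\bigoplus_{w(i)\ge k}L^i$. For calibration, the paper states this lemma with no proof whatsoever, so there is nothing on that side to compare your argument with.

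The problem is the step you defer, and it is not a matter of ``keeping the Koszul signs straight'': the implication you are trying to prove there is false, so no Jacobi-identity recursion can exist. Take $L^1=\langle x\rangle$, $L^2=\langle y\rangle$, $L^3=\langle z\rangle$ and $L^i=0$ otherwise. By your own degree count, the only bracket shapes not forced to vanish (everything else would land in $L^{\ge4}=0$) are the pure brackets $[x^{\otimes n}]\in L^2$ and the mixed brackets $[x^{\otimes p},y]\in L^3$. Set $[x^{\otimes n}]=0$ for all $n$ and $[x,\dots,x,y]=c_p\,z$ ($p$ copies of $x$), with $c_p\neq 0$ for every $p\ge0$. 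Every summand of every generalized Jacobi identity has either an inner pure bracket, which is zero, or an inner mixed bracket, which produces a multiple of $z$; and any outer bracket with an entry of degree $3$ lands in $L^{\ge4}=0$ (including the unary one, $[z]\in L^4$). So all the identities hold with each term vanishing separately, and this is a genuine \Linf-algebra with $L=L_+$. Its curvature is identically zero, hence polynomial; yet for any filtration with $F_1L=L$ one has $0\neq c_p\,z=[x^{\otimes p},y]\in F_{p+2}L^3$ for every $p$, so $F_kL^3$ never vanishes and $L$ is not nilpotent. This example is precisely the situation your sketched induction was supposed to exclude, and it shows why the induction cannot get started: the Jacobi identities are vacuous here (every term is individually zero), so they impose no relations at all on the $c_p$. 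The conclusion is that the converse requires a stronger hypothesis than polynomiality of $F$ on $L^1$ alone --- for instance bounded arity of the brackets into each $L^i$, equivalently polynomiality of the curvature of $L_+\o\Lambda^n$ for every $n$, which is what Definition~\ref{dMC} actually needs in order to produce a cosimplicial scheme. Under that hypothesis your weight-filtration argument completes the proof; under the hypothesis as literally stated, no argument can.
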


The Maurer-Cartan locus $\MC(L)\subset L^1$ of an \Linf-algebra $L^*$
is the vanishing locus of the Maurer-Cartan equation $F(x)=0$. The
differential graded Maurer-Cartan locus $\mc(L)$ of $L$ is the affine
differential graded scheme with underlying scheme the affine space
$L^1$, and with the differential graded algebra of functions
\begin{equation*}
  \CO(\mc(L)) = \Sym (L_+[1]{}^\vee) .
\end{equation*}
The differential $d$ on $\CO(\mc(L))$ is the differential of the
generalization of the Chevalley-Eilenberg complex to \Linf-algebras:
it is the sum of the adjoints of the $n$-fold brackets $[-,\dots,-]$,
which maps $(L^k)^\vee$ to
\begin{equation*}
  \bigoplus_{\substack{i_1+\dots+i_n=k-n+2\\i_1,\dots,i_n\ge1}} \,
  (L^{i_1})^\vee \o \dots \o (L^{i_n})^\vee .
\end{equation*}

\begin{example}
  If $F\colon V\to W$ is an arbitrary polynomial which vanishes at
  $0\in V$, we may form an \Linf-algebra $L^*$ with $L^1=V$, $L^2=W$,
  and all other vector spaces $L^i$ vanishing. The brackets of $L^*$
  are the polarizations of the homogeneous components of the
  polynomial $F\colon L^1\to L^2$. The Maurer-Cartan locus of $L^*$ is
  the vanishing locus of the polynomial $F$, and $\CO(\mc(L))$ is the
  Koszul complex of $F$.
\end{example}

Thus, the differential graded Maurer-Cartan locus for nilpotent
\Linf-algebras generalizes at the same time the differential graded
Maurer-Cartan locus for differential graded Lie algebras and the
Koszul complex for a polynomial map between finite dimensional vector
spaces.

The tensor product $L^*\o\Lambda^n$ of a nilpotent \Linf-algebra $L^*$
with $\Lambda^n$ is again a nilpotent \Linf-algebra, with brackets
\begin{equation*}
  [ x_1\o\alpha_1 , \dots , x_n\o\alpha_n ] =
  (-1)^{\sum_{i>j}k_i\ell_j} \, [x_1,\dots,x_n] \o
  \alpha_1\dots\alpha_n
\end{equation*}
for $x_i\in L^{k_i}$ and $\alpha_i \in (\Lambda^n){}^{\ell_i}$.
\begin{definition}
  The derived Maurer-Cartan locus $\MC^\bull(L)$ of a nilpotent
  \Linf-algebra is the cosimplicial scheme
  \begin{equation*}
    \MC^n(L) = \MC(L\o\Lambda^n) .
  \end{equation*}
\end{definition}

The statement and proof of Theorem~\ref{main} extend without
difficulty to nilpotent \Linf-algebras. The only twist in the proof is
the verification of Lemma~\ref{compatible} in this more general
setting, that $\Phi$ is a morphism of complexes. We leave this task to
the motivated reader.

\begin{bibdiv}
\begin{biblist}

\bib{AM}{book}{
   author={\sc Aguiar, Marcelo},
   author={\sc Mahajan, Swapneel},
   title={Monoidal functors, species and Hopf algebras},
   series={CRM Monograph Series},
   volume={29},
   publisher={American Mathematical Society, Providence, RI},
   date={2010},
}

\bib{BK}{book}{
   author={\sc Bousfield, A. K.},
   author={\sc Kan, D. M.},
   title={Homotopy limits, completions and localizations},
   series={Lecture Notes in Mathematics, Vol. 304},
   publisher={Springer-Verlag, Berlin-New York},
   date={1972},
}

\bib{CK}{article}{
   author={\sc Ciocan-Fontanine, Ionu{\c{t}}},
   author={\sc Kapranov, Mikhail},
   title={Derived Quot schemes},
   journal={Ann. Sci. \'Ecole Norm. Sup. (4)},
   volume={34},
   date={2001},
   pages={403--440},
}

\bib{Dold}{article}{
   author={\sc Dold, Albrecht},
   title={Homology of symmetric products and other functors of complexes},
   journal={Ann. of Math. (2)},
   volume={68},
   date={1958},
   pages={54--80},
}

\bib{DP}{article}{
   author={\sc Dold, Albrecht},
   author={\sc Puppe, Dieter},
   title={Homologie nicht-additiver Funktoren. Anwendungen},
   journal={Ann. Inst. Fourier Grenoble},
   volume={11},
   date={1961},
   pages={201--312},
}

\bib{EM1}{article}{
   author={\sc Eilenberg, Samuel},
   author={\sc Mac\,Lane, Saunders},
   title={On the groups of $H(\Pi,n)$. I},
   journal={Ann. of Math. (2)},
   volume={58},
   date={1953},
}

\bib{EM2}{article}{
   author={\sc Eilenberg, Samuel},
   author={\sc Mac\,Lane, Saunders},
   title={On the groups $H(\Pi,n)$. II. Methods of computation},
   journal={Ann. of Math. (2)},
   volume={60},
   date={1954},
   pages={49--139},
}

\bib{EZ}{article}{
   author={\sc Eilenberg, Samuel},
   author={\sc Zilber, J. A.},
   title={On products of complexes},
   journal={Amer. J. Math.},
   volume={75},
   date={1953},
   pages={200--204},
}

\bib{Kan}{article}{
   author={\sc Kan, Daniel M.},
   title={Functors involving c.s.s. complexes},
   journal={Trans. Amer. Math. Soc.},
   volume={87},
   date={1958},
   pages={330--346},
}

\bib{MM}{article}{
   author={\sc Milnor, John W.},
   author={\sc Moore, John C.},
   title={On the structure of Hopf algebras},
   journal={Ann. of Math. (2)},
   volume={81},
   date={1965},
   pages={211--264},
}

\bib{Quillen}{article}{
   author={\sc Quillen, Daniel},
   title={On the (co-) homology of commutative rings},
   conference={
      title={Applications of Categorical Algebra},
      address={Proc. Sympos. Pure Math., Vol. XVII, New York},
      date={1968},
   },
   book={
      publisher={Amer. Math. Soc., Providence, R.I.},
   },
   date={1970},
   pages={65--87},
}

\bib{RH}{article}{
   author={\sc Quillen, Daniel},
   title={Rational homotopy theory},
   journal={Ann. of Math. (2)},
   volume={90},
   date={1969},
   pages={205--295},
}

\bib{SW}{article}{
   author={\sc{\v{S}}evera, Pavol},
   author={\sc Willwacher, Thomas},
   title={Equivalence of formalities of the little discs operad},
   journal={Duke Math. J.},
   volume={160},
   date={2011},
   pages={175--206},
}

\bib{Tate}{article}{
   author={\sc Tate, John},
   title={Homology of Noetherian rings and local rings},
   journal={Illinois J. Math.},
   volume={1},
   date={1957},
   pages={14--27},
}

\end{biblist}

\end{bibdiv}

\end{document}